\DeclareMathOperator{\supp}{supp}
\DeclareMathOperator{\sign}{sign}
\def\cc{{\mathcal C}}
\def\ff{{\mathcal F}}
\def\lll{{\mathcal L}}
\def\ss{{\mathcal S}}
\def\ffi{\varphi}
\def\eps{\varepsilon}
\def\dst{\displaystyle}
\renewcommand{\Im}{\mathrm{Im}\,}
\def\C{{\mathbb{C}}}
\def\N{{\mathbb{N}}}
\def\R{{\mathbb{R}}}
\def\Z{{\mathbb{Z}}}
\def\heis{{\mathbb{H}^n}}
\newcommand{\norm}[1]{{\left\|{#1}\right\|}}
\newcommand{\ent}[1]{{\left[{#1}\right]}}
\newcommand{\scal}[1]{{\left\langle{#1}\right\rangle}}
\newtheorem{theorem}{Theorem}[section]
\newtheorem{proof of lemma}[theorem]{Proof of Lemma}
\newtheorem{proposition}[theorem]{Proposition}
\theoremstyle{definition}
\newtheorem{definition}[theorem]{Definition}
\newtheorem{example}[theorem]{Example}
\newtheorem{remark}[theorem]{Remark}
\numberwithin{equation}{section}
\begin{document}

\title[Schr{\"o}dinger equation on the Heisenberg group]
{Uncertainty Principle for solutions of the Schr{\"o}dinger equation on the Heisenberg group}

\author{Phiippe Jaming \& Somnath Ghosh}
%
\address{Department of Mathematics, Indian Institute of Science, Bangalore 560012, India.}
\email{somnath.g.math@gmail.com}

\address{Univ. Bordeaux, CNRS, Bordeaux INP, IMB, UMR 5251, F-33400, Talence, France.}
\email{Philippe.Jaming@math.u-bordeaux.fr}
\thanks{The first author was partially supported by ANR Grant \#ANR-24-CE40-5470 CHAT. This research was partially
undertaken during the second author's visit to IMB-Bordeaux, he would like to thank IMB for hospitality and finantal contribution to his visit.}

\subjclass[2020]{Primary 43A80; Secondary 43A30, 35R03}

\date{\today}

\keywords{Fourier transform, Heisenberg group, Schr{\"o}dinger equation, Uncertainty Principle.}

\begin{abstract}
The aim of this paper is two prove two versions of the Dynamical Uncertainty Principle
for the Schr\"odinger equation $i\partial_s u=\mathcal{L}u+Vu$, $u(s=0)=u_0$ where
$\mathcal{L}$ is the sub-Laplacian on the Heisenberg group.

We show two results of this type. For the first one, the potential $V=0$, we establish a dynamical version of 
Amrein-Berthier-Benedicks's Uncertainty Principle that shows that if $u_0$ and $u_1=u(s=1)$ have both small 
support then $u=0$. 
For the second result, we add some potential to the equation and we obtain a dynamical version of the Paley-Wiener
theorem in the spirit of the result of Kenig, Ponce \& Vega \cite{KPV}.
Both results are obtained by suitably transfering results from the Euclidean setting.

We also establish some limitations to Dynamical Uncertainty Principles.
\end{abstract}

\maketitle

\section{Introduction}

The aim of this paper is to prove several Dynamical Uncertainty Principles
for the Schr\"odinger operator associated to the sub-Laplacian of the Heisenberg group.

Let us recall that in Euclidean harmonic analysis, an Uncertainty Principle
is a statement that {\em a function $f$ and its Fourier transform $\widehat{f}$ can not be simultaneously localised}.
There are numerous statements of this form, depending on how localization of functions is measured.
The most famous results of this form are the Heisenberg Uncertainty Principle when localization
is measured in terms of dispersion, and Hardy's Uncertainty Principle when localization
is measured in terms of fast decay. Another family of Uncertainty Principles measures
localization in terms of smallness of the support. For instance, a result by Amrein-Berthier \cite{AB} 
and Benedicks \cite{B} states that a function and its Fourier transform can not both have support of finite measure.
This leads to the introduction of the concept of {\em annihilating pairs} (see Definition 2.1 below and the examples following it): a pair of sets $(S,\Sigma)$ such that a function $f$ supported in $S$ with Fourier transform $\widehat{f}$ supported in $\Sigma$
is necessarily $f=0$.
We refer to the surveys \cite{FS,BD,FBM,M} or the book \cite{HJ} 
for more information on the subject. 

\smallskip

As the Uncertainty Principle is a central phenomena in Euclidean harmonic analysis, there has been a lot
of activity aiming at its extension to other settings, among which Lie groups, and in particular
to nilpotent Lie groups. There has been a lot of activity around Hardy's Uncertainty Principle
({\it see} \cite{T2} and references therein) or on the Theorem of Amrein-Berthier-Benedicks ({\it see e.g.} \cite{NR,GS}).
Here a new difficulty occurs: while in the Euclidean setting, the function and its Fourier
transform are both defined on $\R^d$, this is no longer the case
for the Fourier transform on Lie groups.

\smallskip

One way to overcome this, is to take the dynamical reformulation of the Uncertainty Principle.
The idea is rather simple: the solution of the Schr\"odinger equation
$$
\left\{\begin{matrix} i\partial_tu(x,t)+\Delta_xu(x,t)=0\\
u(x,0)=u_0(x)\end{matrix}\right.
$$
is given by
$$
u(x,t)=\frac{e^{-i\frac{\pi}{4}d\sign(t)}}{(4\pi|t|)^{d/2}}e^{i|x|^2/4t}
\int_{\R^d}e^{i|y|^2/4t}u_0(y)e^{-i\scal{x,y}/2t}\,\mbox{d}y.
$$
Thus $u_1(x)=u(x,1)$ is, up to constant modulus factors that do not affect concentration, essentially the Fourier
transform of $u_0$. One may then restate any Uncertainty Principle for the pair $(f,\widehat{f})$
in terms of an Uncertainty Principle for $(u_0,u_1)$. Such a result is now called a Dynamical Uncertainty Principle.
For instance, we have the following
\begin{enumerate}
\item {\bf Hardy's Uncertainty Principle:}\\
{\em Assume that $u_0(x)=O(e^{-|x|^2/\alpha^2})$ and that 
$u(x,T)=O(e^{-|x|^2/\beta^2})$. If $T/\alpha\beta>1/4$, then $u_0=0$ and if $T/\alpha\beta=1/4$,
then $u_0$ is a multiple of $e^{-(1/\alpha^2+i/4T)|x|^2}$.}

\item {\bf Amrein-Berthier-Benedicks:}\\
{\em Assume that $u_0$ and $u(x,T)$ both have finite support, then
$u_0=0$.}
\end{enumerate}

This simple observation, is the starting point of two fruitful lines of research. 

On one hand,
in a seminal series of papers, Escauriaza, Kenig, Ponce and Vega \cite{EKPV1,EKPV2,EKPV3} have extended
Hardy's Theorem and proved Dynamical Uncertainty Principles associated to the Schr\"odinger Equation 
with potential $V$: $i\partial_tu=\Delta_xu+Vu$ with various conditions on $V$
(as well as for other dispersive equations). It is noteworthy to say that in this case, the result does
not follow from its non-dynamical counterpart and requires a different strategy of proof
using only real variable techniques. It turns out that this strategy is so powerful that, in the absence of 
potential, one may even recover Hardy's original result \cite{CEKPV}.

On the other hand, this formulation is amenable to extension to other settings in which there is a natural notion
of Laplacian (e.g. on graphs or Lie groups), as the variable space does not change with time. The first
result in that direction is due to S. Chanillo \cite{Ch}. The dynamical
version of Hardy's Uncertainty Principle (without potential), on the $H$-type groups has been  first investigated by 
S. Ben Sa\"{\i}d, S. Thangavelu and V.\,N. Dogga \cite{BTD}
and for general step 2 nilpotent Lie groups by J. Ludwig and D. M\"uller \cite{LM}.  
For the Dynamical Uncertainty Principle with potential, see A. Fernandez-Bertolin {\it et al} \cite{FJP}.

Our aim here is precisely to pursue this direction of research for the notion of annihilating pairs
when dealing with the Schr\"odinger equation associated to the sub-Laplacian on the Heisenberg group.
Before describing our results, we need to recall some notions on the Heisenberg group
as can be found {\it e.g.} in \cite{F}.

\smallskip

The $(2n + 1)$-dimensional Heisenberg group, denoted by $\heis$, is $\R^n\times\R^n\times\R$
endowed with the group law
$$
(x,y, t)(x',y', t') = \left(x+x',y+y',t+t'-\frac{1}{2}\sum_{j=1}^n(x_iy_i^\prime-y_ix_i^\prime)\right).
$$
Under this multiplication $\heis$ is a nilpotent unimodular Lie group, the Haar measure
being the Lebesgue measure $\mbox{d}x\,\mbox{d}y\,\mbox{d}t$.
It will be convenient to write $z=(x,y)\in\R^{2n}$, $(z,t)=(x,y,t)\in\heis$
and the Haar measure is then denoted by $\mbox{d}z\,\mbox{d}t$ on $\R^{2n}\times\R$.

It is sometimes convenient to identify $\R^{2n}$ with $\C^n$ writing $z=x+iy$ and then
$\C^n\times\R\simeq\R^{2n}\times\R$.
In this case, the group law on $\heis\simeq\C^n\times\R$ is given by 
$$
(z,t)(z',t')=\bigl(z+z',t+t'+\frac{1}{2}\Im(z\cdot\overline{w})\bigr)
$$
and the Haar measure is still denoted by $\mbox{d}z\,\mbox{d}t$.

The Lie algebra associated to $\heis$ is generated by
the vector fields
$$
X_j:=\frac{\partial}{\partial x_j}+\frac{1}{2}y_j\frac{\partial}{\partial t}
\quad,\quad
Y_j:=\frac{\partial}{\partial y_j}-\frac{1}{2}x_j\frac{\partial}{\partial t}
\quad j=1,2,\ldots,n
$$
and $T=\dfrac{\partial}{\partial t}$.
The sub-Laplacian for the Heisenberg group $\heis$, is given by
\[
\mathcal{L}=\sum_{j=1}^n X_j^2+Y_j^2=
\Delta_{\mathbb R^{2n}}+\frac{|z|^2}{4}\frac{\partial^2}{\partial t^2}-\sum_{j=i}^n
\left(x_j\frac{\partial}{\partial y_j}-y_j\frac{\partial}{\partial x_{j}}\right)\frac{\partial}{\partial t}.
\]
It is well-known that $\lll$ is hypo-elliptic, self-adjoint and non-negative. Further $e^{i\lll}$ generates
a unitary semi-group on $L^2(\heis)$ and the solution of the free Schr{\"o}dinger equation for $\lll$
\begin{equation}\label{exp971}
i\partial_s u(z,t,s)+\mathcal{L}u(z,t,s)=0
\end{equation}
with initial condition $u(z,t,0)=u_0(z,t)$, $u_0\in L^2(\mathbb H^n)$ is given by
$u(\cdot,\cdot,s)=e^{is\lll}u_0$.

The following Dynamical Uncertainty Principle of Hardy type
associated to $\mathcal{L}$ has been shown in \cite{BTD}:

\begin{theorem}[Ben Sa\"id \& Thangavelu \& Dogga]
\label{th:BST}
Let $u_0\in L^2(\heis)$ and $u$ be the solution of \eqref{exp971} with initial condition $u(z,t,0)=u_0(z,t)$.
Assume that, for some time $T>0$,
$$
|u_0(z,t)|=O(e^{-|x|^2/4a^2-\delta|t|})\quad\mbox{and}\quad
|u(z,t,T)|=O(e^{-|x|^2/4b^2-\delta|t|})
$$
for some $\delta>0$ and $ab<T$, then $u=0$.
\end{theorem}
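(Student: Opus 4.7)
The natural strategy is to reduce the Heisenberg problem to a parametrised family of Euclidean problems by partial Fourier transform in the central variable. Set
$$
\hat u^\lambda(z,s):=\int_{\R}u(z,t,s)\,e^{-i\lambda t}\,\mathrm{d}t.
$$
Since $\partial_t$ becomes $i\lambda$, the sub-Laplacian turns into the twisted (special Hermite) operator
$$
\mathcal{L}_\lambda=\Delta_z-\frac{\lambda^2|z|^2}{4}-i\lambda\sum_{j=1}^n\bigl(x_j\partial_{y_j}-y_j\partial_{x_j}\bigr),
$$
and the equation decouples into $i\partial_s\hat u^\lambda+\mathcal{L}_\lambda\hat u^\lambda=0$ for each $\lambda\in\R$. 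The exponential-in-$t$ decay in both hypotheses promotes $\lambda\mapsto\hat u^\lambda(\cdot,0)$ and $\lambda\mapsto\hat u^\lambda(\cdot,T)$ to holomorphic functions of $\lambda$ in the strip $\{|\Im\lambda|<\delta\}$, transferring the $x$-Gaussian bounds $e^{-|x|^2/4a^2}$ and $e^{-|x|^2/4b^2}$ uniformly on that strip.

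For each fixed real $\lambda\ne 0$, the operator $\mathcal{L}_\lambda$ is, via the Schr\"odinger representation of $\heis$, unitarily equivalent to $-|\lambda|(2H+n)$ where $H$ is the standard harmonic oscillator on $L^2(\R^n)$. Equivalently, $e^{is\mathcal{L}_\lambda}$ admits an explicit Mehler-type kernel whose Gaussian width degenerates to the Euclidean free Schr\"odinger kernel as $\lambda\to 0$. The two Gaussian decay conditions on $\hat u^\lambda(\cdot,0)$ and $\hat u^\lambda(\cdot,T)$ therefore fit exactly into a Euclidean Hardy Uncertainty Principle for the harmonic oscillator semigroup; the critical threshold there is $ab<T$ (the Gaussian widths in the theorem being $2a$ and $2b$, so that $(2a)(2b)<4T$ is the standard Hardy condition $\alpha\beta<4T$ recalled in the introduction). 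Applying this theorem for real $\lambda$ close to $0$ forces $\hat u^\lambda(\cdot,0)=0$ on an open interval; holomorphy in $\lambda$ then extends the identity to all $\lambda$, and Fourier inversion in $t$ gives $u_0=0$.

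The main difficulty lies in verifying the Euclidean Hardy inequality in the correct form for the semigroup $e^{is\mathcal{L}_\lambda}$ uniformly in $\lambda$. The effective time parameter of the Mehler kernel is $\sin(\lambda T)/\lambda$, which equals $T$ only in the limit $\lambda\to 0$ and degenerates at the resonant values $\lambda T\in\pi\Z$. The technical crux is therefore to use the strict margin $ab<T$ to close the Hardy estimate in a neighbourhood of $\lambda=0$, where $\sin(\lambda T)/\lambda\approx T$ and the sharp Hardy condition $(2a)(2b)<4T$ leaves the required slack, and then to let holomorphy in $\lambda$ propagate the vanishing past the resonances rather than handle them head-on.
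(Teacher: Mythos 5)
The paper does not prove Theorem \ref{th:BST}: it quotes it from Ben Sa\"{\i}d--Thangavelu--Dogga \cite{BTD}, so there is no internal proof to compare against. Your sketch is essentially the argument of \cite{BTD}, and it coincides with the machinery the paper itself deploys for Theorem \ref{th922}: partial Fourier transform in the central variable, the explicit kernel \eqref{exp973} rewriting the evolution as a Euclidean Fourier transform composed with unimodular Gaussian factors as in \eqref{exp976}, an application of the (directional, since the Gaussian bounds involve only the $x$-half of the variables) Euclidean Hardy uncertainty principle for $\lambda$ near $0$ where the effective time tends to $T$, and propagation to all $\lambda$ by holomorphy on the strip $\{|\Im\lambda|<\delta\}$ supplied by the $e^{-\delta|t|}$ decay --- exactly the role of $\delta$ that Section 2.2 of the paper shows cannot be dispensed with.
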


The result has been extended to all step 2 nilpotent Lie groups in \cite{LM} and to Schr\"odinger
equation with certain potentials on H-type groups in \cite{FJP}.
Among the tasks we tackle in this paper is to show that the extra factor $e^{-\delta|t|}$
is essential, though $\delta$ can be arbitrarily small. We will then
show that this is no longer the case when considering a dynamical version
of Amrein-Berthier-Benedicks' Uncertainty Principle on the Heisenberg group
where the central variable plays no role:

\begin{theorem}
\label{th:ben}
Let $S,\Sigma\subset\R^{2n}$ be two sets of finite measure.
Let $u_0\in L^2(\heis)$ and $u$ be the solution of \eqref{exp971} with initial condition $u(z,t,0)=u_0(z,t)$.
Assume that $\supp u_0\subset S\times\R$ and that, for some time $s>0$
$\supp u(\cdot,\cdot,s)\subset \Sigma\times\R$. Then $u=0$
\end{theorem}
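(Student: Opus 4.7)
Our plan is to reduce this Heisenberg statement to a family of Euclidean problems via the partial Fourier transform in the central variable $t$, and then to invoke the Amrein-Berthier-Benedicks Theorem on $\R^{2n}$.

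\textbf{Step 1 (partial Fourier transform in $t$).} Let $\widetilde u(z,\tau,s):=\int_\R u(z,t,s)\,e^{-it\tau}\,dt$, with the analogous definition for $\widetilde{u_0}$. The substitution $\partial_t\leftrightarrow i\tau$ in $\mathcal{L}$ converts \eqref{exp971} into
$$i\partial_s\widetilde u(z,\tau,s)+\mathcal{L}_\tau\widetilde u(z,\tau,s)=0,\qquad \widetilde u(\cdot,\tau,0)=\widetilde{u_0}(\cdot,\tau),$$
where the $\tau$-twisted Laplacian on $\R^{2n}$ is
$$\mathcal{L}_\tau:=\Delta_{\R^{2n}}-\frac{\tau^2|z|^2}{4}-i\tau N,\qquad N:=\sum_{j=1}^n\left(x_j\frac{\partial}{\partial y_j}-y_j\frac{\partial}{\partial x_j}\right).$$
Since the partial Fourier transform acts only in $t$, the support hypotheses transfer, for every $\tau\in\R$, into $\supp\widetilde{u_0}(\cdot,\tau)\subset S$ and $\supp\widetilde u(\cdot,\tau,s)\subset\Sigma$.

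\textbf{Step 2 (disentangling $\mathcal{L}_\tau$).} The generator $N$ of simultaneous rotations of the pairs $(x_j,y_j)$ commutes with both $\Delta$ and $|z|^2$, hence with $\Delta-\tau^2|z|^2/4$. Therefore
$$e^{is\mathcal{L}_\tau}=e^{-isH_\tau}\circ e^{s\tau N},\qquad (e^{s\tau N}f)(z)=f(R_{s\tau}z),$$
where $H_\tau:=-\Delta+\tau^2|z|^2/4$ is a rescaled harmonic oscillator and $R_{s\tau}$ is a Lebesgue-measure-preserving rotation of $\R^{2n}$. Provided $s\tau\notin\pi\Z$, the Mehler formula gives
$$e^{-isH_\tau}f(z)=c(s,\tau)\,e^{i\gamma|z|^2}\int_{\R^{2n}}e^{i\gamma|w|^2}f(w)\,e^{-i\delta z\cdot w}\,dw,$$
with $\gamma=\gamma(s,\tau):=\frac{|\tau|}{2}\cot(|\tau|s)\in\R$ and $\delta=\delta(s,\tau):=\frac{|\tau|}{\sin(|\tau|s)}\in\R\setminus\{0\}$; for $\tau=0$ the explicit Euclidean formula of the introduction fits the same template, with $\gamma=1/(4s)$, $\delta=1/(2s)$, $R_{s\tau}=I$.

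\textbf{Step 3 (Amrein-Berthier-Benedicks in $\R^{2n}$).} Fix $\tau$ with $s\tau\notin\pi\Z$; such $\tau$ form a co-null subset of $\R$. Define
$$g_\tau(w):=e^{i\gamma(s,\tau)|w|^2}\widetilde{u_0}(R_{s\tau}w,\tau).$$
Then $\supp g_\tau\subset R_{s\tau}^{-1}S$ has the same finite measure as $S$, and the identity of Step 2 combined with $\supp\widetilde u(\cdot,\tau,s)\subset\Sigma$ yields $\supp\widehat{g_\tau}\subset\delta(s,\tau)\,\Sigma$, again a set of finite measure. The Euclidean Amrein-Berthier-Benedicks Theorem applied in $\R^{2n}$ therefore forces $g_\tau=0$, whence $\widetilde{u_0}(\cdot,\tau)=0$ for almost every $\tau\in\R$. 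Plancherel in the $t$ variable finally gives $u_0=0$.

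\textbf{Main difficulty.} The analytic heart of the proof is Step 2: identifying that the symplectic rotation piece $-i\tau N$ splits off as a measure-preserving rotation of $\R^{2n}$, and that the remaining harmonic-oscillator Schr\"odinger evolution has, through the Mehler kernel, a ``chirped Fourier transform'' shape. This is what allows the support conditions on $(v,e^{is\mathcal{L}_\tau}v)$ to match, up to a rotation and a dilation, those of a function and its Fourier transform, and thus to fall within the scope of Amrein-Berthier-Benedicks. The degenerate values $s\tau\in\pi\Z$ at which the Mehler kernel collapses must be excluded, but they form a countable, hence null, set and so do not affect the final Plancherel step.
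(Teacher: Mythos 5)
Your proposal is correct and follows essentially the same route as the paper: partial Fourier transform in the central variable, identification of the resulting propagator (for $s\tau\notin\pi\Z$) as a chirp times a Euclidean Fourier transform composed with an invertible linear map of $\R^{2n}$, and then Amrein--Berthier--Benedicks applied for almost every $\tau$. The paper packages the rotation-plus-dilation as the single matrix $J_\lambda=\frac{\lambda}{2}\bigl(J+\cot(\lambda s_0)I_{2n}\bigr)$ obtained by expanding the $\lambda$-twisted convolution with the Mehler-type kernel, rather than factoring the semigroup as you do, and your Mehler constants $\gamma,\delta$ are off by an inessential factor of $2$, but neither difference affects the argument.
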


Our actual result is more general and provides a way of transferring other
results about some annihilating pairs from the Fourier transform to
Dynamical Uncertainty Principles associated to the sub-Laplacian of the Heisenberg group.
However, we will show that a more quantitative results (so-called strong annihilating pairs)
do not transfer to this more general setting.

We then investigate the influence of the potential. Even in the Euclidean setting, not much is known,
excepted for a result by Kenig, Ponce and Vega \cite{KPV}. We will here show that this result
can also be transferred to the Heisenberg group. 
This result is a form of Paley-Wiener theorem:

\begin{theorem}\label{th921}
Let $u\in C\left([0,1]:L^2(\mathbb H^n)\right)$ be a solution of 
\begin{equation}\label{exp921}
i\partial_s u(z,t,s)+\mathcal{L}u(z,t,s)+V(z,s)u(z,t,s)=0,
\end{equation}
where $(z,t)\in\mathbb H^n,$ $s\in[0,1]$ and $V:\mathbb H^n\times [0,1]\rightarrow\mathbb C$ is independent
of the central variable $t$.

Assume further that the potential $V$
is bounded, i.e., $\|V\|_{L^\infty (\mathbb C^n\times [0,1])}<\infty$ and
\begin{equation}\label{exp922}
\lim_{\rho\rightarrow\infty}\int_0^1\|V(\cdot,s)\|_{L^\infty(\mathbb C^n\setminus B_\rho)}\,\mathrm{d}s=0.
\end{equation}
Suppose
\begin{eqnarray}
&&\sup_{0\leq s\leq 1}\int_{\mathbb H^n}|u(z,t,s)|^2 \,\mathrm{d}z\,\mathrm{d}t<\infty, \label{exp923} \\
&&\int_{\mathbb H^n} e^{2a_0 x_1}|u(z,t,0)|^2 \,\mathrm{d}z \,\mathrm{d}t<\infty \quad \text{for some } a_0>0,\label{exp924}
\end{eqnarray}
and
\begin{eqnarray}
&&\begin{aligned}
&\supp\left(u(\cdot,\cdot,1)\right)\subseteq \left\{(z,t)\in\mathbb H^n:|z_1|\leq a_1, |t|\leq a_2\right\}\\
&\mbox{for some }0<a_1,a_2<\infty.
\end{aligned}\label{exp925}
\end{eqnarray}
Then $u=0$.
\end{theorem}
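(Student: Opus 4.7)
The plan is to perform a partial Fourier transform of $u$ in the central variable $t$ to decouple the equation into a family of Schr\"odinger-like equations on $\R^{2n}$, and then apply the Euclidean Kenig--Ponce--Vega theorem \cite{KPV} to each slice. The independence of $V$ from $t$ is essential to preserving the form of the equation after this transform.

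Concretely, setting $\widehat u(z,\lambda,s)=\int_\R u(z,t,s)\,e^{-i\lambda t}\,\mathrm{d}t$ and using the explicit form of $\mathcal L$, a direct computation yields, for each $\lambda\in\R$,
$$i\partial_s\widehat u(z,\lambda,s)+\mathcal L_\lambda\widehat u(z,\lambda,s)+V(z,s)\widehat u(z,\lambda,s)=0,$$
with the $\lambda$-twisted sub-Laplacian on $\R^{2n}$
$$\mathcal L_\lambda=\Delta_z-\frac{\lambda^2|z|^2}{4}-i\lambda R,\qquad R=\sum_{j=1}^n\bigl(x_j\partial_{y_j}-y_j\partial_{x_j}\bigr).$$
Plancherel in $t$ transforms \eqref{exp923} and \eqref{exp924} into the analogous $L^2$ and weighted $L^2$ estimates for $\widehat u$ on $\R^{2n}\times\R$. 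The compact $t$-support from \eqref{exp925}, combined with the Paley--Wiener theorem, gives $\supp\widehat u(\cdot,\lambda,1)\subseteq\{|z_1|\leq a_1\}$ for every $\lambda\in\R$ (and $\lambda\mapsto\widehat u(z,\lambda,1)$ is entire of exponential type $\leq a_2$). By Fubini, for a.e.\ $\lambda\in\R$ the slice $\widehat u(\cdot,\lambda,\cdot)$ satisfies hypotheses analogous to those of a KPV-style Paley--Wiener theorem on $\R^{2n}$.

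It remains to show $\widehat u(\cdot,\lambda,\cdot)\equiv 0$ for a.e.\ $\lambda$, as Plancherel in $\lambda$ then yields $u=0$. For $\lambda=0$ the equation is the flat Euclidean Schr\"odinger equation on $\R^{2n}$ with bounded decaying potential $V$, and \cite{KPV} applies directly. For $\lambda\neq 0$ I would first remove the first-order twist by conjugating with the time-dependent rotation $R_{-\lambda s}$ of each $(x_j,y_j)$-plane: setting $w(z,s)=\widehat u(R_{-\lambda s}z,\lambda,s)$ cancels the term $-i\lambda R$ (since $\Delta_z$ and $R$ commute with these rotations) and leaves
$$i\partial_sw+\Delta_zw-\frac{\lambda^2|z|^2}{4}w+\widetilde V(z,s)w=0,\qquad\widetilde V(z,s)=V(R_{-\lambda s}z,s).$$
Since rotations preserve both $\|V(\cdot,s)\|_{L^\infty}$ and balls, $\widetilde V$ still satisfies \eqref{exp922}; since $R_0=I$ and rotations fix the spheres $\{|z_1|=\text{const}\}$, the initial-time weighted decay and the final-time support condition transfer cleanly to $w$. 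An Appell/lens-type transformation then kills the harmonic potential $-\lambda^2|z|^2/4$ and maps the equation to a flat Euclidean Schr\"odinger equation on $\R^{2n}$ with bounded decaying potential on a reparametrized time interval; the hypotheses pass through to the KPV form (weighted decay at the initial time, half-space support at the final time), so \cite{KPV} again forces the transformed solution to vanish.

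The main obstacle is the Appell transform: it is valid only on time intervals of length $O(1/|\lambda|)$ before a caustic appears, so for large $|\lambda|$ one cannot cover $[0,1]$ in a single application. Overcoming this requires either partitioning $[0,1]$ into short subintervals and iterating the reduction (propagating the conclusion through intermediate times via the $L^2$-energy bound coming from boundedness of $V$), or an alternative strategy such as directly establishing a KPV-style uniqueness theorem for the Schr\"odinger equation with a harmonic-oscillator potential and bounded perturbation, in the spirit of \cite{FJP}.
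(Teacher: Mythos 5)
Your reduction scheme is exactly the one the paper uses: partial Fourier transform in $t$ to get the twisted Laplacian on $\R^{2n}$ (Step 1), conjugation by the rotation $e^{-s\lambda J}$ to remove the first-order term and land on the Hermite operator (Step 2), a lens/Appell transform in the spirit of \cite{CF} to remove the harmonic potential (Step 3), and then \cite{KPV} (Step 4); the hypotheses transfer essentially as you describe. The one genuine gap is the point you yourself flag as ``the main obstacle'': you leave unresolved how to handle large $|\lambda|$, where the lens transform develops a caustic before time $1$, and neither of the workarounds you sketch is the right one. Iterating over short subintervals of $[0,1]$ does not work as stated, because the Kenig--Ponce--Vega theorem needs the support (or one-sided exponential decay) hypothesis at the \emph{final} time of each subinterval, and you have no such information at intermediate times; and proving a KPV-type theorem directly for the harmonic oscillator is a separate project that the paper does not undertake.

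The paper's resolution is an idea you already wrote down but did not exploit. You observed that, by \eqref{exp925} and Paley--Wiener, $\lambda\mapsto u^\lambda(z,1)$ extends holomorphically in $\lambda$ (entire of exponential type $\le a_2$). It therefore suffices to run Steps 2--4 only for $0<|\lambda|<\pi/2$, where $\tan(|\lambda|)$ is finite and the lens transform covers the whole time interval: this yields $u^\lambda=0$, in particular $u^\lambda(\cdot,\cdot,1)=0$, for these $\lambda$. Analyticity in $\lambda$ then forces $u^\lambda(\cdot,\cdot,1)=0$ for all real $\lambda$, hence $u(\cdot,\cdot,1)=0$, and uniqueness for the time-reversed $L^2$ flow with bounded potential gives $u\equiv 0$. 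With this one addition your argument closes; the rest matches the paper's proof, including the minor point that the final-time support set $\left\{|z_1|\le a_1\right\}$ is invariant under the rotations in each $(x_j,y_j)$-plane, so the strip condition survives Step 2 (the paper phrases this via the tilted strip $\left\{|\cos(\lambda)x_1+\sin(\lambda)y_1|\le a_3\right\}$).
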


\smallskip

The remaining of this paper is organized as follows. The next section is devoted to the proof of
Theorem \ref{th:ben} with Section 2.2 devoted to establishing some limitations
on the Dynamical Uncertainty Principle on the Heisenberg Group. In Section 3, we prove
Theorem \ref{th921}.

\section{Dynamical Uncertainty Principles associated to $\lll$}

In this section, we will need the following notation.
For a function $f\in L^1(\heis)$, we write
$$
f^\lambda(x,y)=\int_\R f(x,y,t)e^{it\lambda}\,\mbox{d}t
$$
for the partial Fourier transform of $f$ in the central variable $t$.
The convolution of two functions $f,g\in L^1(\heis)$ is defined by
$$
f*g(z,t)=\int_{\heis}f(w,s)g\bigl((w,s)^{-1}(z,t)\bigr)\,\mbox{d}w\,\mbox{d}s
$$
and an easy computation shows that
$$
(f*g)^\lambda(z)=f^\lambda*_\lambda g^\lambda(z):=
\int_{\C^n}f^\lambda(z-w)g^\lambda(w)e^{i\frac{\lambda}{2}\Im(z\cdot\bar w)}\,\mbox{d}w.
$$
This last expression is called the $\lambda$-twisted convolution of $f^\lambda$ and $g^\lambda$.

\subsection{Annihilating pairs}

Let us recall the following classical notion when dealing with Uncertainty Principles \cite{HJ}:

\begin{definition}
Let $S,\Sigma$ be two measurable subsets of $\R^d$. Then 
\begin{enumerate}
\renewcommand{\theenumi}{\roman{enumi}}
\item $(S,\Sigma)$ is a (weak) \emph{annihilating pair} if, 
$$
\supp f\subset S\quad\mbox{and}\quad\supp\widehat{f}\subset\Sigma
$$
implies $f=0$.

\item 
$(S,\Sigma)$ is called a \emph{strong annihilating pair} if there exists $C=C(S,\Sigma)$
such that
\begin{equation}
\label{eq:sa1}
\norm{f}_{L^2(\R^d)}\leq C\bigl(\norm{f}_{L^2(\R^d\setminus S)}+\|\widehat{f}\|_{L^2(\R^d\setminus\Sigma)}\bigr).
\end{equation}
\end{enumerate}
\end{definition}

Note that $(S,\Sigma)$ is (strongly) annihilating if and only if $(\Sigma,S)$ is (strongly) annihilating.

\begin{example}\label{examp1}
Here are some classical examples:

\smallskip

\noindent{\it(i)} If $S$ is compact and $\R^d\setminus\Sigma$ has an accumulation point, then $(S,\Sigma)$
is a weak annihilating pair since $\supp f\subset S$ implies that $\widehat{f}$ is analytic.

\smallskip

\noindent{\it(ii) Logvinenko-Sereda, \cite{LS}.} \\
If $\Sigma$ is compact then $(S,\Sigma)$ is a strong annihilating pair if and only if 
$E=\R^d\setminus S$
is dense in the sense that there exists $a,\gamma>0$ such that, for every $x\in\R^d$, 
$|E\cap x+[-a,a]^d|\geq \gamma |[-a,a]^d|$. 

In this case, an essentially sharp estimate of the constant is due to Kovrijkine \cite{Ko} in dimension $d=1$ 
and the proof can be extended to arbitrary dimension:
if $\Sigma\subset[-c,c]^d$ then $C(S,\Sigma)\leq \left(\dfrac{\kappa}{\gamma}\right)^{\kappa(ac+1)}$
where $\kappa$ is a constant that depends on $d$ only.

\smallskip

\noindent{\it(iii) Amrein-Berthier-Benedicks, \cite{AB,B}.} \\
If $S,\Sigma\subset\R^d$ are two sets of finite measure, then  $(S,\Sigma)$
is a strong annihilating pair.

Further, an essentialy sharp estimate of the constant is due to Nazarov \cite{N} in dimension $d=1$
(see \cite{J} for $d\geq 1$ where sharpness is unknown): $C(S,\Sigma)\leq \kappa e^{\kappa |S||\Sigma|}$
where $ \kappa $ is a constant that depends on $d$ only.

\smallskip

\noindent{\it(iv) Bonami-Demange, \cite{BD}.} \\
A slight elaboration on Benedicks' proof was given in \cite{BD}:
Consider the class of sets
$$
\ff=\{E\subset \R^d\,:\mbox{ for almost every }x\in\R^d,\ (x+\Z^d)\cap E\mbox{ is finite}\}.
$$
The class $\ff$ contains sets of finite measures, as well as $\ff'$, the class of sets
that concentrate on a finite number of lines, where $E\in\ff'$ is defined via the following property:

{\em There is a finite set of unit vectors $e_1,\ldots,e_N\subset\R^d$, and functions $\eta_1,\ldots,\eta_k$ with $\lim_{|x|\to+\infty}\eta_j(x)=0$ such that
if $x\in E$ then $|x-\scal{x,e_k}e_k|\leq \eta_k(x)$ for some $k\in\{1,\ldots,N\}$.}

For instance, for any $C>0$, $\{(x,y)\in\R^2\,:|xy|\leq C\}\in \ff'$.
Then Bonami-Demange proved that if $S,\Sigma\in\ff$, then $(S,\Sigma)$ is a weak annihilating pair.

\smallskip

For each of these pairs, one should notice that if $T$ is an invertible linear transform,
then $(S,T\Sigma)$ and $(TS,\Sigma)$ are still annihilating pairs.

\smallskip

\noindent{\it(v)  Shubin-Vakilian-Wolff, \cite{SVW}.} \\
Define $\rho(x)=\min(1,|x|^{-1})$ and say that $E$ is $\eps$-thin ($0<\eps<1$) if, for every $x\in\R^d$,
$\big|E\cap B\big(x,\rho(x)\big)\big|\leq\eps\big|B\big(x,\rho(x)\big)\big|$. The sets $\{(x,y)\in\R^2\,:|xy|\leq C\}$ are examples of thin sets in this sense.
Then, there is an $0<\eps_0\leq 1$
such that, if $0<\eps<\eps_0$ is small enough
and $S,\Sigma$ are $\eps$-thin, $(S,\Sigma)$ is a strong annihilating pair. 

It is reasonable to conjecture that one can take $\eps_0=1$ so that there is no restriction on $\eps$.
If this conjecture were true, then $(S,T\Sigma)$ and $(TS,\Sigma)$ would still be annihilating pairs
for any invertible linear transform $T$. The current result requires $T$ to have determinant near enough to $1$. 
\end{example}

We can now state our transference result:

\begin{theorem}\label{th922}
Let $S,\Sigma\subset \R^{2n}$ be such that, for every invertible linear transform $T$, $(S,T\Sigma)$
is a weak annihilating pair.

Let $u\in C^1([0,+\infty):L^2(\mathbb H^n))$ be a solution of the free Schr{\"o}dinger equation for the sub-Laplacian of 
the Heisenberg group $\mathbb H^n,$
\begin{equation*}
i\partial_s u(z,t,s)+\mathcal{L}u(z,t,s)=0
\end{equation*}
with initial condition $u(z,t,0)=u_0(z,t)$, $u_0\in L^2(\mathbb H^n)$.
Assume that
$$
\supp u_0\subset S\times \R
\quad\mbox{and}\quad
\supp u(\cdot,\cdot,s_0)\subset \Sigma\times\R
$$
for some $0<s_0<\infty.$ Then $u=0$.
\end{theorem}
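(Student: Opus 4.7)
The strategy is to reduce to the Euclidean annihilating pair hypothesis via the partial Fourier transform $f \mapsto f^\lambda$ in the central variable $t$, which converts $\lll$ into a one-parameter family of twisted sub-Laplacians $\lll_\lambda$ on $\C^n \simeq \R^{2n}$. From $\supp u_0 \subset S \times \R$ and $\supp u(\cdot,\cdot,s_0) \subset \Sigma \times \R$, Fubini yields $\supp u_0^\lambda \subset S$ and $\supp \bigl(u(\cdot,\cdot,s_0)\bigr)^\lambda \subset \Sigma$ for almost every $\lambda \in \R$. Applying the partial Fourier transform to \eqref{exp971} (using $\partial_t \mapsto -i\lambda$ in the explicit formula for $\lll$) gives $\bigl(u(\cdot,\cdot,s)\bigr)^\lambda = e^{is\lll_\lambda} u_0^\lambda$, so it suffices to show $u_0^\lambda = 0$ for a.e.\ $\lambda$.

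For $\lambda \neq 0$ with $s_0 \lambda \notin \pi\Z$, Mehler's formula for the special Hermite semigroup provides an explicit integral kernel
$$K_{s_0,\lambda}(z,w) = c_{s_0,\lambda} \exp\bigl(i\alpha(s_0,\lambda)(|z|^2+|w|^2) + i\beta(s_0,\lambda)\, B_\lambda(z,w)\bigr),$$
where $\alpha,\beta$ are real with $\beta \neq 0$, and $B_\lambda$ is a non-degenerate real bilinear form on $\R^{2n} \times \R^{2n}$, so that $B_\lambda(z,w) = \scal{A_{s_0,\lambda} z, w}$ for an invertible linear map $A_{s_0,\lambda}$. Setting $g_\lambda(w) := e^{i\alpha(s_0,\lambda)|w|^2}\, u_0^\lambda(w)$, which has the same support as $u_0^\lambda$ since the prefactor has modulus one, one obtains, up to a unit modulus factor depending only on $z$,
$$\bigl(u(\cdot,\cdot,s_0)\bigr)^\lambda(z) = c_{s_0,\lambda}\, \widehat{g_\lambda}\bigl(T_{s_0,\lambda}\, z\bigr)$$
for some invertible linear map $T_{s_0,\lambda}$ (a scalar multiple of $A_{s_0,\lambda}$).

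Consequently $\supp g_\lambda \subset S$ and $\supp \widehat{g_\lambda} \subset T_{s_0,\lambda}\, \Sigma$; the assumption that $(S, T\Sigma)$ is a weak annihilating pair for every invertible linear $T$ forces $g_\lambda = 0$, hence $u_0^\lambda = 0$ for a.e.\ $\lambda$, and Fourier inversion in $\lambda$ yields $u_0 = 0$, hence $u \equiv 0$. The exceptional $\lambda$-values $\{\lambda : s_0 \lambda \in \pi\Z\} \cup \{0\}$ where the Mehler kernel degenerates are countable and hence negligible. The main technical point is the second step: writing down the explicit Mehler--Laguerre kernel of $e^{is\lll_\lambda}$ (classical, see \emph{e.g.} \cite{F}, or by analytic continuation of the heat kernel of the twisted Laplacian) and verifying that the bilinear part of its exponent is non-degenerate for the relevant pairs $(s_0,\lambda)$, so that the linear twist $T_{s_0,\lambda}$ is genuinely invertible and the hypothesis on $(S, T\Sigma)$ can be applied.
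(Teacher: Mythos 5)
Your proposal is correct and follows essentially the same route as the paper: partial Fourier transform in the central variable, explicit Mehler-type kernel for $e^{is_0\lll_\lambda}$ written as a unimodular Gaussian times a Euclidean Fourier transform precomposed with the invertible map $J_\lambda=\frac{\lambda}{2}\bigl(J+\cot(\lambda s_0)I_{2n}\bigr)$, and then the hypothesis that $(S,T\Sigma)$ annihilates for every invertible $T$. The paper carries out the kernel computation explicitly via the $\lambda$-twisted convolution with $h_{is_0}^\lambda(z)=(4\pi)^{-n}\bigl(\lambda/(i\sin(\lambda s_0))\bigr)^n e^{i\frac{\lambda}{4}\cot(\lambda s_0)|z|^2}$, which is exactly the Mehler formula you invoke.
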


The main difficulty here is that one needs to use intricate distribution theory
to write the solution via a kernel and that this kernel is not nice enough to be exploited
({\it see} \cite{BG} for more details about the kernel). To overcome this difficulty
we will take a Fourier transform in the central variable. This changes the Schr\"odinger
equation into another equation that can be solved via an integral kernel and this can be linked
to the Euclidean Fourier transform. The price to pay is that our condition on the support
of $u_0$ is a bit restrictive and does not cover general sets of finite measure in $\heis$.

\begin{proof}
For $s\geq 0$, $z\in\C^n$ and $\lambda\in\R$, let
$$
u^\lambda(z,s)=\int_\R u(z,t,s)e^{i\lambda t}\,\mbox{d}t
$$
be the inverse Fourier transform of $u(z,t,s)$ in the central variable
and $u_0^\lambda$ be the inverse Fourier transform of $u_0$ in the central variable.
Then
\begin{equation}\label{exp972}
u^\lambda(z,s_0)=u_0^\lambda(z) *_\lambda h_{is_0}^\lambda (z)
\end{equation}
for a.e. $\lambda\in\mathbb R\setminus\{0\},$ where 
\begin{equation}\label{exp973}
h_{is_0}^\lambda (z)=(4\pi)^{-n} \left(\dfrac{\lambda}{i \sin(\lambda s_0)}\right)^n
e^{i\frac{\lambda}{4} \cot(\lambda s_0) |z|^2}.
\end{equation}
Note that this formula is valid for $\lambda\in\R\setminus \frac{\pi}{s_0}\Z$.

Next note that $\supp u^\lambda(\cdot,0)\subset S$ and $\supp u^\lambda(\cdot,s_0)\subset \Sigma$.
From \eqref{exp972} and \eqref{exp973}, we have
\begin{multline}
\label{exp974}
u^\lambda(z,s_0)
=\int_{\mathbb C^n} u_0^\lambda(w)h_{is_0}^\lambda (z-w)
e^{-\frac{i\lambda}{2}\text{Im}(z\cdot\bar{w})}\,\mbox{d}w \\
=(4\pi)^{-n} \left(\dfrac{\lambda}{i \sin(\lambda s_0)}\right)^n\int_{\mathbb C^n} u_0^\lambda(w)
e^{i\frac{\lambda}{4} \cot(\lambda s_0) |z-w|^2}e^{-\frac{i\lambda}{2}\text{Im}(z\cdot\bar{w})}\,\mbox{d}w \\
=c_{\lambda,s_0}e^{i\frac{\lambda}{4} \cot(\lambda s_0)|z|^2} \int_{\mathbb C^n}
u_0^\lambda(w) e^{i\frac{\lambda}{4} \cot(\lambda s_0) |w|^2}
e^{-\frac{i\lambda}{2}\left(\cot(\lambda s_0)\text{Re}(z\cdot\bar{w})+\text{Im}(z\cdot\bar{w})\right)}\,\mbox{d}w,
\end{multline}
where $c_{\lambda,s_0}=(4\pi)^{-n} \, \lambda^n/(i \sin(\lambda s_0))^n.$
Now write $z=x+iy,$ $w=\xi+i\eta$ and $J=\begin{pmatrix}
0 & I_n \\
-I_n & 0
\end{pmatrix},$ and observe that
$$
\begin{aligned}
\cot(\lambda s_0)\text{Re}(z\cdot\bar{w})&+\text{Im}(z\cdot\bar{w})\\
=&\cot(\lambda s_0)(\scal{x,\xi} +\scal{y,\eta})+(\scal{y,\xi}-\scal{x,\eta}) \\
=&\scal{(\cot(\lambda s_0)I_{2n}+J)(x,y), (\xi,\eta)}.
\end{aligned}
$$
Further define $\gamma_\lambda$ on $\R^n\times\R^n$ as
$$
\gamma_\lambda(u,v)=e^{i\frac{\lambda}{4} \cot(\lambda s_0)(|u|^2+|v|^2)}
$$
and note that this function takes its values in the set of complex numbers of modulus $1$.
We can now write \eqref{exp974} as
\begin{equation}\label{exp976}
u^\lambda(x,y,s_0)=c_{\lambda,s_0}\gamma_\lambda(x,y)\ff[\gamma_\lambda u_0^\lambda]
\bigl(J_\lambda(x,y)\bigr)
\end{equation}
where $\ff$ is the Euclidean Fourier transform on $\R^{2n}$ and
$J_\lambda=\frac{\lambda}{2}(J+\cot(\lambda s_0)I_{2n}).$

Observe that
$$
\det J_\lambda=\left(\frac{\lambda}{2}\right)^{2n}\bigl(\cot^2(\lambda s_0)+1\bigr)^n\not=0
$$
so that $J_\lambda$ is invertible. Thus,
as $u^\lambda(\cdot,s_0)$ is supported in $\Sigma$,
$\ff[\gamma_\lambda u_0^\lambda]$ is supported in $J_\lambda\Sigma$.
On the other hand, $\gamma_\lambda u_0^\lambda$ is supported in $S$.
As $(S,J_\lambda\Sigma)$ is an annihilating pair, it follows that $\gamma_\lambda u_0^\lambda=0$ for almost every $\lambda$, thus $u_0=0$.
\end{proof}

\begin{remark}
The derivation of Formula of the kernel $h_{is_0}$ can be found {\it e.g.} in \cite{LM} even replacing
$\heis$ by a more general step 2 nilpotent Lie group.
\end{remark}

One would of course like to obtain a quantitative counterpart of this result.
We will show that this is not fully possible below.

Let us show a partial result that will highlight the difficulties.
Take $S,\Sigma\subset\R^{2n}$ two sets of finite measure. 
Writing $u^\lambda$ as a Fourier transform as in \eqref{exp976} and noticing
that $|\gamma_\lambda|=1$, we may 
apply the higher dimensional extension of Nazarov's Uncertainty Principle \cite{J}
to obtain
\begin{equation}
\norm{u_0^\lambda}_{L^2(\R^{2n})}^2
\leq\kappa e^{\kappa |S||J_\lambda\Sigma|}\bigl(\norm{u_0^\lambda}_{L^2(\R^{2n}\setminus S)}^2
+\norm{\ff[\gamma_\lambda u_0^\lambda]}_{L^2(\R^{2n}\setminus J_\lambda\Sigma)}^2\bigr).
\label{exp977}
\end{equation}
A simple computation shows that
$$
\|\ff[\gamma_\lambda u_0^\lambda]\|_{L^2(\R^{2n}\setminus J_\lambda\Sigma)}^2 
=c\|u^\lambda(\cdot,s_0)\|_{L^2(\R^{2n}\setminus \Sigma)}^2
$$
for some constant $c$ independent of $\lambda$ and $s_0$.
On the other hand 
$$
|J_\lambda\Sigma|=|\Sigma|\left(\frac{\lambda}{2}\right)^{2n}\bigl(\cot^2(\lambda s_0)+1\bigr)^{n},
$$
so that \eqref{exp977} reads
\begin{equation}\label{exp975}
\begin{aligned}
\norm{u_0^\lambda}_{L^2(\R^{2n})}^2
\leq\kappa &e^{\kappa \lambda^{2n}\bigl(\cot^2(\lambda s_0)+1\bigr)^{n}|S||\Sigma|}\\
&\Bigl(\norm{u_0^\lambda}_{L^2(\R^{2n}\setminus S)}^2
+\norm{u^\lambda(\cdot,s_0)}_{L^2(\R^{2n}\setminus \Sigma)}^2\Bigr).
\end{aligned}
\end{equation}

Unfortunately, due to the term $\lambda^{2n}\bigl(\cot^2(\lambda s_0)+1\bigr)^{n}$, one can not use Parseval
in the central variable to obtain an estimate of
$\norm{u_0}_{L^2(\heis)}^2$ in terms of $\norm{u_0}_{L^2((\R^{2n}\setminus S)\times\R)}^2$
and $\norm{u(\cdot,s_0)}_{L^2((\R^{2n}\setminus \Sigma)\times\R)}^2$.

There is however one exception: assume there is an $a>0$ such that $u_0^\lambda=0$ for $|\lambda|>a$
and that $s_0a<\pi$. First note that, for $|\lambda|<a$,
\begin{eqnarray*}
\kappa\lambda^{2n}\bigl(\cot^2(\lambda s_0)+1\bigr)^{n}&=&
\kappa\left(\frac{1}{s_0^2}\left(\frac{\sin (\lambda s_0)}{\lambda s_0}\right)^{-2}\cos^2 (\lambda s_0)+\lambda^2\right)^n\\
&\leq&\kappa':= \kappa\frac{2^na^{2n}}{\sin^{2n} (s_0a)}.
\end{eqnarray*}
But then
$$
\begin{aligned}
\|u_0\|_{L^2(\heis)}^2&=\int_{\R}\norm{u_0^\lambda}_{L^2(\R^{2n})}^2\,\mbox{d}\lambda
=\int_{-a}^a\norm{u_0^\lambda}_{L^2(\R^{2n})}^2\,\mbox{d}\lambda\\
\leq&\kappa e^{\kappa'|S||\Sigma|}
\int_{-a}^a\left(\norm{u_0^\lambda}_{L^2(\R^{2n}\setminus S)}^2
+\norm{u^\lambda(\cdot,s_0)}_{L^2(\R^{2n}\setminus \Sigma)}^2\right)\,\mbox{d}\lambda\\
\leq&\kappa e^{\kappa'|S||\Sigma|}
\int_{\R}\left(\norm{u_0^\lambda}_{L^2(\R^{2n}\setminus S)}^2
+\norm{u^\lambda(\cdot,s_0)}_{L^2(\R^{2n}\setminus \Sigma)}^2\right)\,\mbox{d}\lambda\\
=&\kappa e^{\kappa'|S||\Sigma|}
\left(\norm{u_0}_{L^2((\R^{2n}\setminus S)\times\R)}^2
+\norm{u(\cdot,s_0)}_{L^2((\R^{2n}\setminus \Sigma)\times\R)}^2\right).
\end{aligned}
$$
We have thus proved the following:

\begin{proposition}
Let $S,\Sigma\subset\R^{2n}$ be two sets of finite measure. Let $a,s_0>0$ be such
that $s_0a<\pi$. Let $u_0\in L^2(\heis)$ be such that $u_0^\lambda=0$ for $|\lambda|>a$.
Let $u$ be the solution of $i\partial_su+\lll u=0$ with initial condition $u_0$.
Then
$$
\|u_0\|_{L^2(\heis)}^2\leq
\kappa e^{\kappa'|S||\Sigma|}
\left(\norm{u_0}_{L^2((\R^{2n}\setminus S)\times\R)}^2
+\norm{u(\cdot,s_0)}_{L^2((\R^{2n}\setminus \Sigma)\times\R)}^2\right)
$$
where $\kappa'$ depend on $n,a,s_0$ only and $\kappa$ on $n$ only.
\end{proposition}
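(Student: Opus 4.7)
The plan is to estimate slice-by-slice in the central frequency variable $\lambda$, applying Nazarov's quantitative Amrein--Berthier--Benedicks inequality on each slice, and then to integrate in $\lambda$ via Plancherel; the band-limited hypothesis $u_0^\lambda=0$ for $|\lambda|>a$ together with $s_0 a<\pi$ is precisely what keeps the slice-wise constant uniformly bounded.

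First, as in the proof of Theorem~\ref{th922}, take the partial Fourier transform in $t$ to arrive at the identity
$$
u^\lambda(x,y,s_0)=c_{\lambda,s_0}\,\gamma_\lambda(x,y)\,\ff[\gamma_\lambda u_0^\lambda]\bigl(J_\lambda(x,y)\bigr),
$$
with $|\gamma_\lambda|\equiv 1$ and $J_\lambda=\frac{\lambda}{2}(J+\cot(\lambda s_0)I_{2n})$ invertible for $\lambda\notin\frac{\pi}{s_0}\Z$. Applying the $2n$-dimensional version of Nazarov's inequality \cite{J} to the annihilating pair $(S,J_\lambda\Sigma)$ for the function $\gamma_\lambda u_0^\lambda$ yields
$$
\norm{u_0^\lambda}_{L^2(\R^{2n})}^2\leq\kappa\, e^{\kappa|S||J_\lambda\Sigma|}\bigl(\norm{u_0^\lambda}_{L^2(\R^{2n}\setminus S)}^2+\norm{\ff[\gamma_\lambda u_0^\lambda]}_{L^2(\R^{2n}\setminus J_\lambda\Sigma)}^2\bigr).
$$
A short change of variables in which the Jacobian exactly cancels $|c_{\lambda,s_0}|^{-2}$ rewrites the last term as $c_n\norm{u^\lambda(\cdot,s_0)}_{L^2(\R^{2n}\setminus\Sigma)}^2$ with a constant depending only on $n$.

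Second, control the slice-wise exponent. The Jacobian computation gives $|J_\lambda\Sigma|=|\Sigma|(\lambda/2)^{2n}(\cot^2(\lambda s_0)+1)^n$, and the identity $\lambda^2(1+\cot^2(\lambda s_0))=s_0^{-2}\bigl(\sin(\lambda s_0)/(\lambda s_0)\bigr)^{-2}$ shows that on the interval $[-a,a]\subset(-\pi/s_0,\pi/s_0)$ the factor $\lambda^{2n}(1+\cot^2(\lambda s_0))^n$ is bounded by a constant $\kappa'=\kappa'(n,a,s_0)$.

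Finally, integrate in $\lambda$ over $[-a,a]$, which is legitimate since $u_0^\lambda=0$ outside this interval; then enlarge the integration domain on the right-hand side to all of $\R$ and apply Plancherel in the central variable to obtain the claim. The only reason this argument works is the combination of the band limit and the subcritical condition $s_0 a<\pi$: without either one, the factor $\lambda^{2n}(\cot^2(\lambda s_0)+1)^n$ blows up at the poles $\lambda=k\pi/s_0$ or as $|\lambda|\to\infty$, and the slice-wise constant cannot be integrated to a uniform estimate---this is the underlying obstruction that also prevents a fully quantitative version of Theorem~\ref{th922}.
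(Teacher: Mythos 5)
Your argument is correct and follows the same route as the paper: partial Fourier transform in the central variable, the higher-dimensional Nazarov inequality applied slice-wise to the pair $(S,J_\lambda\Sigma)$, the Jacobian cancellation identifying the second term with $\norm{u^\lambda(\cdot,s_0)}_{L^2(\R^{2n}\setminus\Sigma)}^2$, the uniform bound on $\lambda^{2n}\bigl(\cot^2(\lambda s_0)+1\bigr)^n$ over $[-a,a]$ under the condition $s_0a<\pi$, and finally integration in $\lambda$ with Plancherel. Your closing remark on why the band limit and the subcritical condition are both essential matches the paper's own discussion of the obstruction to a fully quantitative version of Theorem~\ref{th922}.
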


\begin{remark}[Observability inequality]
Fix $\lambda\in\mathbb R\setminus \{0\},$ and consider the $\lambda$-twisted Laplacian or the special
Hermite operator on $\mathbb R^{2n},$
\[
L_\lambda=\Delta_{\R^{2n}}-\dfrac{|\lambda|^2}{4}(|x|^2+|y|^2)+i\lambda \sum_{j=1}^n \left(
x_j\dfrac{\partial}{\partial y_j}-y_j\dfrac{\partial}{\partial x_j}\right)
\]
Then the solution of the Schr\"odinger equation
\begin{equation}\label{exp938}
\begin{aligned}
i\partial_s u^\lambda(x,y,s)+L_\lambda u^\lambda(x,y,s)=0,\\
 u^\lambda(x,y,0)=u_0^\lambda(x,y) \in L^2(\mathbb R^{2n}),
\end{aligned}
\end{equation}
is obtained in \eqref{exp976} as
\begin{equation*}
u^\lambda(x,y,s)=c_{\lambda,s}\gamma_\lambda(x,y)\ff[\gamma_\lambda u_0^\lambda]
\bigl(J_\lambda(x,y)\bigr)
\end{equation*}
for $s\notin\frac{\pi}{\lambda}\mathbb N.$ Hence, if $S,\Sigma\subset \R^{2n}$ be such that, for every invertible
linear transform $T$, $(S,T\Sigma)$ is a strong annihilating pair, we can obtain the following observability
inequality at two time points for $L_\lambda$ ({\it see} \cite{WWZ} for this kind of inequalities
for the standard Laplacian and its applications).

Let $s_2>s_1\geq 0$ with $s_2-s_1\notin \frac{\pi}{\lambda}\mathbb N.$ Then there exits a constant
$C=C(A,\Sigma,\lambda,s_1,s_2)>0$ such that for all solution $u^\lambda$ of \eqref{exp938},
\begin{equation}
\norm{u_0^\lambda}_{L^2(\R^{2n})}^2
\leq C \Bigl(\norm{u^\lambda(\cdot,s_1)}_{L^2(\R^{2n}\setminus S)}^2
+\norm{u^\lambda(\cdot,s_2)}_{L^2(\R^{2n}\setminus \Sigma)}^2\Bigr).
\end{equation}
Note that such sets $(S, \Sigma)$ are described in Example \ref{examp1}, and the constant $C$ can be
calculated explicitly for some instances.
\end{remark}

\subsection{Limitations to Dynamical Uncertainty Principles}

In this section, we will show that there are limitations
to Dynamical Uncertainty Principles for the Schr\"odinger equation \eqref{exp971}.
This is based on the following example built on an observation from \cite{BGX}.

Let $\ffi\in\ss(\R)$ 
and set
\begin{equation}
\label{eq:counterx}
u_0(z,t)=\int_\R e^{|\lambda|(it-|z|^2/4)}\ffi(\lambda)\,\mbox{d}\lambda.
\end{equation}
A simple computation shows that
$$
\lll u_0=-n\int_\R
|\lambda|e^{|\lambda|(it-|z|^2/4)}\ffi(\lambda)\,\mbox{d}\lambda
=in\partial_tu_0.
$$
It follows that
$$
u(z,t,s)=u_0(z,t-ns)
$$
is a solution of $\lll u(z,t,s)+i\partial_s u(z,t,s)=0$ with $u(z,t,0)=u_0(z,t)$.

Further, as $\ffi\in \ss(\R)$ 
we can write
$$
u_0(z,t)=\ff[e^{\lambda|z|^2/4}\ffi(\lambda)\mathbf{1}_{\R^-}(\lambda)](t)
+\ff[e^{-\lambda|z|^2/4}\ffi(\lambda)\mathbf{1}_{\R^+}(\lambda)](-t)
$$
where $\ff$ is the Fourier transform on $L^1(\R)$.
From Parseval and Fubini, we get
\begin{equation}
\label{eq:parsevlce}
\int_{\heis}|u_0(z,t)|^2\,\mbox{d}z\,\mbox{d}t
=(2\pi)^{n+1}\int_0^{+\infty}
\frac{|\ffi(\lambda)+\ffi(-\lambda)|^2}{\lambda^n}\,\mbox{d}\lambda.
\end{equation}

From now on, we will take $\alpha>0$ and further assume that $\ffi$ is 
supported in $[\alpha,\alpha+1]$. From the definition of $u_0$, we immediately obtain that,
for every $s>0$ 
$$
|u(z,t,s)|\leq  \norm{\ffi}_{L^1(\R)}e^{-\alpha|z|^2/4}.
$$
Further, integrating by parts, we have for every $k\in\N$,
$$
u(z,t,s)=
\int_{\R}\frac{\bigl(-\sign(\lambda)\bigr)^k\ffi^{(k)}(\lambda)}{(it-|z|^2/4-ins)^k}e^{|\lambda|(-|z|^2/4+it-ins)}\,\mbox{d}\lambda
$$
and, as $|e^{|\lambda|(-|z|^2/4+it-ins)}|\leq e^{-\alpha|z|^2/4}$ on $\supp\ffi$,
we obtain
$$
|u(z,t,s)|\leq  \norm{\ffi^{(k)}}_{L^1(\R)}\frac{e^{-\alpha|z|^2/4}}{\bigl((t-ns)^2+|z|^4/16\bigr)^{k/2}}.
$$
This shows that, in Theorem \ref{th:BST}, one can not take $\delta=0$, that is, in the dynamical version
of Hardy's Uncertainty Principle on the Heisenberg group, one needs
to impose a faster than polynomial decay in the central variable $t$.

Next, notice that the function defined in \eqref{eq:counterx} extends into an holomorphic function
in the $z$ variable. More precisely, here $z=(x,y)\in\R^{2n}$ and $u$
extends holomorphically to the cone $\Omega=\{(x+ix',y+iy'), x,x',y,y'\in\R^n\,: |x'|^2+|y'|^2<|x|^2+|y|^2\}$.
In particular, the function $u$ has full support in the $z$-variable.
This explains why the situation is a bit better for Theorem \ref{th:ben} where
no restriction in the central variable is required.

However, this example also shows that there are limitations to quantitative counterparts to Theorem \ref{th:ben}.
Indeed, from $\supp\ffi=[\alpha,\alpha+1]$, we obtain
$$
\int_{\heis}|u(z,t,s)|^2\,\mbox{d}z\,\mbox{d}t\approx \alpha^{-n}\|\ffi\|^2_{L^2(\R)}.
$$

On the other hand, 
$$
\begin{aligned}
\int_{(\R^{2n}\setminus [-r,r]^{2n})\times\R}&|u(z,t,s)|^2\,\mbox{d}z\,\mbox{d}t\\
\leq&\int_{\R^{2n}\setminus [-r,r]^{2n}}\int_\R
\frac{\norm{\ffi'}_{L^1(\R)}^2}{(t-ns)^2+r^4/16}\,\mbox{d}t\,e^{-\alpha|z|^2/2}\,\mbox{d}z\\
\leq&\frac{4\pi}{r^2}\norm{\ffi'}_{L^1(\R)}^2
\left(2\int_r^{+\infty}e^{-\alpha t^2/2}\,\mbox{d}t\right)^{2n}\\
\leq&\frac{2^{2(n+1)}\pi}{\alpha^{2n} r^{2(n+1)}}e^{-n\alpha r^2}\norm{\ffi'}_{L^1(\R)}^2.
\end{aligned}
$$
This shows that for every $s_1<s_2$, the inequality
\begin{multline*}
\int_{\heis}|u_0(z,t)|^2\,\mbox{d}z\,\mbox{d}t\leq C\left(
\int_{\bigl(\R^{2n}\setminus Q(0,r)\bigr)\times\R}|u(z,t,s_1)|^2\,\mbox{d}z\,\mbox{d}t\right.\\
+\left.\int_{\bigl(\R^{2n}\setminus Q(0,r)\bigr)\times\R}|u(z,t,s_2)|^2\,\mbox{d}z\,\mbox{d}t
\right)
\end{multline*}
can not hold for every $u$ solution of $i\partial_s u(z,t,s)+\mathcal{L}u(z,t,s)=0$
with initial condition $u(z,t,0)=u_0(z,t)$, $u_0\in L^2(\heis)$.

\section{Paley-Wiener type theorem}

In this section, we will prove Theorem \ref{th921} from the introduction.
We proceed through a few steps.

\smallskip

\noindent{\bf Step 1.} {\em Reduction to the magnetic Laplacian (twisted Laplacian) on $\mathbb R^{2n}$.}

\smallskip

For $\lambda\in\mathbb R\setminus\{0\},$ let $u^\lambda(z,s)=\int_\mathbb R u(z,t,s)e^{i\lambda t} dt$
be the inverse Fourier transform of $u$ in the central variable. Identify $z$
with $(x,y)=(x_1,\ldots,x_n,y_1,\ldots,y_n)\in\mathbb R^{2n}$.

\begin{proposition}\label{prop923}
Let $u$ be a solution of \eqref{exp921} and satisfies the hypothesis of Theorem \ref{th921}. Then
there is a set $E\subset \mathbb R\setminus\{0\}$ of measure $0$, such that, if
$\lambda\in \mathbb R\setminus(E\cup \{0\})$, then 
 $u^\lambda$ solves
\begin{equation}\label{exp926}
i\partial_s u^\lambda(x,y,s)+\Delta_{C_\lambda}u^\lambda(x,y,s)+V(x,y,s)u^\lambda(x,y,s)=0,
\end{equation}
where $\Delta_{C_\lambda}=\left(\nabla-iC_\lambda\right)^2$ with 
$\dst C_\lambda=\dfrac{\lambda}{2}J\begin{pmatrix}x\\ y\end{pmatrix}$
and
$J=\dst\begin{pmatrix}
0 & I_n \\
-I_n & 0
\end{pmatrix}.$ Moreover,
\begin{eqnarray}
&&\sup_{0\leq s\leq 1}\int_{\mathbb R^{2n}}|u^\lambda(x,y,s)|^2\,\mathrm{d}x\,\mathrm{d}y<\infty, \label{exp927} \\
&&\int_{\mathbb R^{2n}} e^{2a_0 x_1}|u^\lambda(x,y,0)|^2\,\mathrm{d}x\,\mathrm{d}y<\infty \quad \text{for some } a_0>0, \text{ and} \label{exp928} \\
&&\supp\left(u^\lambda(\cdot,\cdot,1)\right)\subseteq
\left\{(x,y)\in\mathbb R^{2n}:|\cos(\lambda)x_1 + \sin(\lambda)y_1|\leq a_3\right\} \label{exp929}
\end{eqnarray}
for some $a_3<\infty$. In addition, for each $(x,y)$, $u^\lambda(x,y,1)$ can be extended holomorpically
in $\lambda$ to a neighbourhood of the real line.
\end{proposition}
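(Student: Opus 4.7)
The plan is to take the partial Fourier transform in the central variable $t$ of equation \eqref{exp921} and to read off from it both the new equation and the quantitative conditions satisfied by $u^\lambda$. Using the explicit formula
\[
\lll=\Delta_{\R^{2n}}+\frac{|z|^2}{4}\frac{\partial^2}{\partial t^2}-\sum_{j=1}^n\left(x_j\frac{\partial}{\partial y_j}-y_j\frac{\partial}{\partial x_j}\right)\frac{\partial}{\partial t},
\]
the substitution $\partial_t\leftrightarrow -i\lambda$ converts $\lll$ into
\[
L_\lambda:=\Delta_{\R^{2n}}-\frac{\lambda^2|z|^2}{4}+i\lambda\sum_{j=1}^n\left(x_j\frac{\partial}{\partial y_j}-y_j\frac{\partial}{\partial x_j}\right),
\]
which is exactly $(\nabla-iC_\lambda)^2$ after expansion, with $C_\lambda$ as in the statement. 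Since $V(z,s)$ is independent of $t$, the potential passes through the partial Fourier transform as a pure multiplier, so $u^\lambda$ solves \eqref{exp926} in the distributional sense; the interchange of derivatives and the $t$-integral is justified from $u\in C([0,1];L^2(\heis))$ by routine mollification.

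For \eqref{exp927} and \eqref{exp928}, Plancherel in $t$ converts \eqref{exp923} and \eqref{exp924} into, respectively, $\int_\R\norm{u^\lambda(\cdot,s)}_{L^2(\R^{2n})}^2\,\mathrm{d}\lambda\leq C$ uniformly in $s$ and $\int_\R\int_{\R^{2n}}e^{2a_0 x_1}|u^\lambda(x,y,0)|^2\,\mathrm{d}x\,\mathrm{d}y\,\mathrm{d}\lambda<\infty$. Fubini then yields a conull set outside of which each inner integral is finite, giving \eqref{exp928} and the $s=0$ case of \eqref{exp927}. To extend the latter to every $s\in[0,1]$, I use the equation \eqref{exp926} itself: pairing with $u^\lambda$ and taking real parts gives $\frac{d}{ds}\norm{u^\lambda(\cdot,s)}^2\leq 2\norm{V}_{L^\infty}\norm{u^\lambda(\cdot,s)}^2$, and Grönwall produces $\norm{u^\lambda(\cdot,s)}^2\leq e^{2\norm{V}_{L^\infty}}\norm{u^\lambda(\cdot,0)}^2$ on $[0,1]$.

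For the support claim \eqref{exp929}, I invoke \eqref{exp925} in two stages. The $(x,y)$-support condition $|z_1|\leq a_1$ passes unchanged through the partial Fourier transform in $t$, so $\supp u^\lambda(\cdot,1)\subseteq\{|z_1|\leq a_1\}$; Cauchy–Schwarz against the unit vector $(\cos\lambda,\sin\lambda)$ then yields \eqref{exp929} with $a_3=a_1$. For the holomorphic extension in $\lambda$, the hypothesis $|t|\leq a_2$ in \eqref{exp925} forces the partial Fourier integral defining $u^\lambda(z,1)$ to run over the compact interval $[-a_2,a_2]$, and the usual Paley–Wiener estimate produces an entire extension of $\lambda\mapsto u^\lambda(z,1)$ of exponential type $a_2$.

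The genuinely delicate point is bookkeeping the exceptional set $E$: each Plancherel/Fubini step creates its own measure-zero set of $\lambda$ where a pointwise-in-$\lambda$ statement may fail, and one must take $E$ to be the countable union of these (together with $\lambda=0$) so that all of \eqref{exp926}--\eqref{exp929} hold simultaneously outside $E$. Given a sufficiently regular representative of $u\in C([0,1];L^2(\heis))$, no further analytical difficulty arises.
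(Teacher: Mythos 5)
Your proposal is correct and follows essentially the same route as the paper: take the partial Fourier transform in $t$ (turning $\partial_t$ into $-i\lambda$ and $\mathcal{L}$ into $\Delta_{C_\lambda}$), use Plancherel in the central variable plus Fubini for \eqref{exp927}--\eqref{exp928}, deduce \eqref{exp929} from the trivial inclusion of supports, and get the holomorphic extension from the compact $t$-support via Paley--Wiener. If anything, you are slightly more careful than the paper on two points it glosses over --- propagating the a.e.-$\lambda$ finiteness from $s=0$ to all $s\in[0,1]$ via the energy/Gr\"onwall estimate rather than asserting it directly from Plancherel, and justifying the analytic continuation explicitly --- so no changes are needed.
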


\begin{proof}
A direct calculation shows the first part, see, for instance, \cite{FJP}.

For the remaining part, first note the Plancherel formula
\[
\int_{\mathbb R\setminus\{0\}}\int_{\mathbb R^{2n}} |u^\lambda(x,y,s)|^2\,\mathrm{d}x\,\mathrm{d}y\,\mathrm{d}\lambda
=\int_{\mathbb H^n} |u(z,t,s)|^2\,\mathrm{d}z\,\mathrm{d}t,
\]
where $0\leq s\leq 1.$ Hence, it follows from \eqref{exp923} and \eqref{exp924} that
\begin{eqnarray*}
&&\sup_{0\leq s\leq 1}\int_{\mathbb R^{2n}}|u^\lambda(x,y,s)|^2\,\mathrm{d}x\,\mathrm{d}y<\infty, \quad \text{and} \\
&&\int_{\mathbb R^{2n}} e^{2a_0 x_1}|u^\lambda(x,y,0)|^2\,\mathrm{d}x\,\mathrm{d}y <\infty \quad \text{for some } a_0>0,
\end{eqnarray*}
for a.e. $\lambda.$ In addition, condition \eqref{exp925} gives
\begin{eqnarray*}
\supp\left(u^\lambda(\cdot,\cdot,1)\right)\subseteq \left\{x\in\mathbb R^{2n}:|x_1|+|y_1|\leq a_3\right\} \\
\subseteq \left\{x\in\mathbb R^{2n}:|\cos(\lambda)x_1 + \sin(\lambda)y_1|\leq a_3\right\}
\end{eqnarray*}
for some $a_3<\infty.$
\end{proof}

Now onwards $0<|\lambda|<\frac{\pi}{2}$ is fixed.

\smallskip

\noindent{\bf Step 2.} {\em Reduction to the Hermite operator on $\mathbb R^{2n}$.}

\smallskip

\begin{proposition}\label{prop921}
Suppose $u$ and $V$ satisfy the hypothesis of Theorem \ref{th921}. Let $u^\lambda$ be as in Proposition \ref{prop923}.
Write
\begin{equation}\label{exp930}
v(x,y,s)=u^\lambda\left(e^{-s\lambda J}(x,y),s\right).
\end{equation}
Then $v\in C\left([0,1]:L^2(\mathbb R^{2n})\right)$ and solves
\begin{multline}\label{exp931}
i\partial_s v(x,y,s)+\Delta v(x,y,s)-\dfrac{|\lambda|^2}{4}(|x|^2+|y|^2) v(x,y,s)\\+\tilde{V}(x,y,s)v(x,y,s)=0,
\end{multline}
where
\begin{equation}\label{eq:defvtile}
\tilde{V}(x,y,s)=V(e^{-s\lambda J}(x,y),s).
\end{equation}
Additionally, the potential $\tilde{V}$ is bounded and satisfies
\begin{equation}\label{exp932}
\lim_{\rho\rightarrow\infty}\int_0^1\|\tilde{V}(\cdot,\cdot,s)\|_{L^\infty(\mathbb R^{2n}\setminus B_\rho)}
\,\mathrm{d}s=0.
\end{equation}
Furthermore,
\begin{eqnarray}
&&\sup_{0\leq s\leq 1}\int_{\mathbb R^{2n}}|v(x,y,s)|^2\,\mathrm{d}x\,\mathrm{d}y <\infty, \label{exp933} \\
&&\int_{\mathbb R^{2n}} e^{2a_0 x_1}|v(x,y,0)|^2\,\mathrm{d}x\,\mathrm{d}y <\infty \quad \text{for some } a_0>0, \text{ and} \label{exp934} \\
&&\supp\left(v(\cdot,\cdot,1)\right)\subseteq \left\{(x,y)\in\mathbb R^{2n}:|x_1|\leq a_3\right\} \label{exp935}
\end{eqnarray}
for some $a_3<\infty.$
\end{proposition}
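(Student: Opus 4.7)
The plan is to realize $v$ as a rotation gauge of $u^\lambda$. Because $J^2=-I_{2n}$, one has the explicit formula $e^{-s\lambda J}=\cos(s\lambda)I_{2n}-\sin(s\lambda)J$, so this is an orthogonal isometry of $\mathbb R^{2n}$ for every $s$. Consequently $v(\cdot,\cdot,s)$ is just $u^\lambda(\cdot,\cdot,s)$ composed with an isometry. From this I immediately obtain $v\in C([0,1]:L^2(\mathbb R^{2n}))$ with $\|v(\cdot,\cdot,s)\|_{L^2}=\|u^\lambda(\cdot,\cdot,s)\|_{L^2}$, which turns \eqref{exp927} into \eqref{exp933}; since $e^{0}=I_{2n}$, one has $v(\cdot,\cdot,0)=u^\lambda(\cdot,\cdot,0)$ and \eqref{exp928} becomes \eqref{exp934}. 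By the same isometry property, $\|\tilde V(\cdot,\cdot,s)\|_{L^\infty(\mathbb R^{2n}\setminus B_\rho)}=\|V(\cdot,s)\|_{L^\infty(\mathbb R^{2n}\setminus B_\rho)}$, so boundedness of $V$ and the decay hypothesis \eqref{exp922} transfer to $\tilde V$ and yield \eqref{exp932}.

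The heart of the proof is the PDE \eqref{exp931}. Setting $w(x,y,s):=e^{-s\lambda J}(x,y)$ and using $\partial_s w=-\lambda Jw$, the chain rule gives
$$
\partial_s v(x,y,s)=(\partial_s u^\lambda)(w,s)-\lambda(Jw)\cdot(\nabla u^\lambda)(w,s).
$$
Because orthogonal transformations commute with $\Delta$, $\Delta v(x,y,s)=(\Delta u^\lambda)(w,s)$. I then expand
$$
\Delta_{C_\lambda}=\Delta-i\lambda\,J(x,y)\cdot\nabla-\frac{\lambda^2}{4}(|x|^2+|y|^2),
$$
using $\nabla\cdot C_\lambda=\tfrac{\lambda}{2}\mathrm{tr}(J)=0$ and $|C_\lambda|^2=\tfrac{\lambda^2}{4}|(x,y)|^2$. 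Evaluating at $w$, the harmonic term becomes $\tfrac{\lambda^2}{4}(|x|^2+|y|^2)u^\lambda(w)$ since $e^{-s\lambda J}$ preserves the Euclidean norm, and the first-order term becomes $-i\lambda(Jw)\cdot(\nabla u^\lambda)(w)$. Substituting into \eqref{exp926}, the first-order magnetic term at $w$ cancels exactly with the convective term from $\partial_s v$, leaving
$$
i\partial_s v+\Delta v-\frac{\lambda^2}{4}(|x|^2+|y|^2)v+\tilde V v=0,
$$
which is \eqref{exp931}.

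The support statement \eqref{exp935} reduces to a short trigonometric computation. From $e^{-\lambda J}=\cos(\lambda)I_{2n}-\sin(\lambda)J$, the first and $(n+1)$-st components of $w=e^{-\lambda J}(x,y)$ are $\cos(\lambda)x_1-\sin(\lambda)y_1$ and $\sin(\lambda)x_1+\cos(\lambda)y_1$ respectively, so $\cos(\lambda)w_1+\sin(\lambda)w_{n+1}=x_1$. Hence the constraint \eqref{exp929} on $u^\lambda(\cdot,\cdot,1)$ becomes $|x_1|\leq a_3$ on $v(\cdot,\cdot,1)$, as required.

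The only genuine obstacle is the cancellation in the PDE derivation: one has to keep careful track of signs and use both the antisymmetry of $J$ and the divergence-freeness of $C_\lambda$ to see that the magnetic first-order term and the rotation-induced convective term match exactly. All other parts of the proposition are bookkeeping and follow from the single observation that the orthogonal rotation $e^{-s\lambda J}$ preserves Lebesgue measure, the Laplacian, the Euclidean norm, and balls centred at the origin.
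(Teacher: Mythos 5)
Your proposal is correct, and for everything except the PDE itself it follows the paper's argument essentially verbatim: the observation that $e^{-s\lambda J}=\cos(s\lambda)I_{2n}-\sin(s\lambda)J$ is an orthogonal (hence measure-, norm- and ball-preserving) map immediately transfers \eqref{exp927}--\eqref{exp928} and the hypotheses on $V$ into \eqref{exp932}--\eqref{exp934}, and your identity $\cos(\lambda)w_1+\sin(\lambda)w_{n+1}=x_1$ for $w=e^{-\lambda J}(x,y)$ is exactly the computation the paper uses to pass from \eqref{exp929} to \eqref{exp935}.

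The one place where you genuinely diverge is the derivation of \eqref{exp931}: the paper simply cites \cite[Proposition 5.1]{CF}, whereas you carry out the gauge computation by hand. Your computation is sound: expanding $\Delta_{C_\lambda}=\Delta-i\lambda\,J(x,y)\cdot\nabla-\tfrac{\lambda^2}{4}(|x|^2+|y|^2)$ (using $\nabla\cdot C_\lambda=0$ and $|J\xi|=|\xi|$), and noting that the chain-rule term $-i\lambda(Jw)\cdot(\nabla u^\lambda)(w,s)$ coming from $i\partial_s v$ exactly reproduces the first-order magnetic term of $\Delta_{C_\lambda}u^\lambda$ evaluated at $w=e^{-s\lambda J}(x,y)$, one substitutes \eqref{exp926} and the first-order terms drop out, leaving the Hermite equation. (Strictly speaking the two first-order terms do not cancel against each other; rather the convective term supplies the missing magnetic term so that \eqref{exp926} can be invoked wholesale --- but the algebra you describe is the same and the signs check out with the convention $\partial_s(e^{-s\lambda J})=-\lambda Je^{-s\lambda J}$.) What your route buys is a self-contained proof that does not lean on \cite{CF}; what the paper's route buys is brevity and the reassurance of a reference that also handles the regularity needed to justify the formal chain-rule manipulation for $L^2$ solutions. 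If you want your version to be fully rigorous you should add a word on why the computation is legitimate for solutions that are merely in $C([0,1]:L^2(\mathbb R^{2n}))$, e.g.\ by working with the evolution groups or by a density argument.
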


\begin{proof}
The fact that $v$ solves \eqref{exp931} follows from \cite[Proposition 5.1]{CF}.

For the remaining of the proof, we will use that
\[e^{-t J}=\begin{pmatrix}
\cos(t) \, I_n & -\sin(t) \, I_n \\
\sin(t) \, I_n & \cos(t) \, I_n
\end{pmatrix}.\]
Next, the boundedness of $\tilde{V}$ is immediate from the boundedness of $V.$ Further, for $s\in [0,1],$
\[\sup_{|(x,y)|\geq \rho}|\tilde{V}(x,y,s)|=\sup_{|(x,y)|\geq \rho}|V(e^{-s\lambda J}(x,y),s)|
=\sup_{|(x,y)|\geq \rho}|V(x,y,s)|.\]
Here the last equality holds as $\left(e^{-s\lambda J}\right)^t=e^{-s\lambda J^t}=e^{s\lambda J}$ we
have $|e^{-s\lambda J}(x,y)|=|(x,y)|.$ Therefore \eqref{exp932} follows from \eqref{exp922}.

Since $u$ satisfies \eqref{exp923}-\eqref{exp925}, Proposition \ref{prop923} ensures that $u^\lambda$
satisfies \eqref{exp927}-\eqref{exp929}. Now, for $s\in [0,1],$ we know that
\[\det(e^{s\lambda J})=e^{s\lambda \text{Tr}(J)}=1.\]
Hence, from \eqref{exp927} we get
\begin{eqnarray*}
\sup_{0\leq s\leq 1}\int_{\mathbb R^{2n}}|v(x,y,s)|^2 \,\mathrm{d}x\,\mathrm{d}y
&=&\sup_{0\leq s\leq 1}\int_{\mathbb R^{2n}}|u^\lambda(e^{-N_\lambda s}(x,y),s)|^2 \,\mathrm{d}x\,\mathrm{d}y \\
&=&\sup_{0\leq s\leq 1}\int_{\mathbb R^{2n}}|u^\lambda(x,y,s)|^2 \,\mathrm{d}x\,\mathrm{d}y <\infty.
\end{eqnarray*}
Further \eqref{exp934} is identical with \eqref{exp928}. To see condition \eqref{exp935},
write $(\tilde{x},\tilde{y})=e^{-\lambda J}(x,y),$ then
\begin{eqnarray*}
&\left|\cos(\lambda) \tilde{x}_1+\sin(\lambda)\tilde{y}_1\right| \\
=&|\cos(\lambda)(\cos(\lambda)x_1-\sin(\lambda)y_1)+\sin(\lambda)(\sin(\lambda)x_1+\cos(\lambda)y_1)|=|x_1|.
\end{eqnarray*}
Thus from \eqref{exp929} and \eqref{exp930} we get
\begin{equation*}
\text{supp}\left(v(\cdot,\cdot,1)\right)\subseteq \left\{(x,y)\in\mathbb R^{2n}:|x_1|\leq a_3\right\}
\end{equation*}
as claimed.
\end{proof}

\smallskip

\noindent{\bf Step 3.} {\em Reduction to the Laplacian on $\mathbb R^{2n}.$}

\smallskip

\begin{proposition}\label{prop922}
Suppose $u$ and $V$ satisfy the hypothesis of Theorem \ref{th921} and let $\tilde V$
be defined in \eqref{eq:defvtile} and, for $z=(x,y)\in\R^{2n}$, define
$$
W(z,s)=(1+|\lambda|^2 s^2)^{-1} \tilde{V}\left(\frac{z}{\sqrt{1+|\lambda|^2 s^2}},
\frac{\arctan(|\lambda|s)}{|\lambda|}\right)
$$
so that $W$ is bounded with
\begin{equation}\label{exp940}
\lim_{\rho\rightarrow\infty}\int_0^{\frac{1}{|\lambda|}\tan(|\lambda|)}
\|W(\cdot,\cdot,s)\|_{L^\infty(\mathbb R^{2n}\setminus B_\rho)}\,\mathrm{d}s=0.
\end{equation}

Let $v$ as in \eqref{exp930}. For
$0<|\lambda|<\frac{\pi}{2},$ set
\begin{equation}\label{exp936}
w(z,s)=\dfrac{\exp\left(i\frac{ s|z|^2}{4(1+|\lambda|^2 s^2)}|\lambda|^2\right)}{(1+|\lambda|^2 s^2)^{n/2}} 
v\left(\frac{z}{\sqrt{1+|\lambda|^2 s^2}},\frac{\arctan(|\lambda|s)}{|\lambda|}\right).
\end{equation}

Then $w\in C\left([0,\frac{1}{|\lambda|}\tan(|\lambda|)]:L^2(\mathbb R^{2n})\right)$ and solves
\begin{equation}\label{exp937}
i\partial_s w(x,y,s)+\Delta w(x,y,s)+W(x,y,s)w(x,y,s)=0,
\end{equation}

Moreover,
\begin{eqnarray}
&&\sup_{0\leq s\leq \frac{1}{|\lambda|}\tan(|\lambda|)}\int_{\mathbb R^{2n}}|w(x,y,s)|^2 \,\mathrm{d}x\,\mathrm{d}y
<\infty, \label{exp941} \\
&&\int_{\mathbb R^{2n}} e^{2a_0 x_1}|w(x,y,0)|^2 \,\mathrm{d}x\,\mathrm{d}y<\infty \quad \text{for some } a_0>0, \text{ and} \label{exp942} \\
&&\supp\left(w\left(\cdot,\cdot,\frac{1}{|\lambda|}\tan(|\lambda|)\right)\right)
\subseteq \left\{(x,y)\in\mathbb R^{2n}:|x_1|\leq a_4\right\} \label{exp943}
\end{eqnarray}
for some $a_4<\infty.$
\end{proposition}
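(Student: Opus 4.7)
The plan is to recognize (\ref{exp936}) as a rescaled version of the classical pseudo-conformal (lens) transformation, which conjugates the Hermite--Schr\"odinger equation (\ref{exp931}) on the interval $[0,1]$ to the free Schr\"odinger equation (\ref{exp937}) with the bounded potential $W$ on the interval $[0,\tan(|\lambda|)/|\lambda|]$; an analogous transformation appears in \cite{KPV} in the Euclidean setting without harmonic potential. Once this conjugation is checked, the properties (\ref{exp940})--(\ref{exp943}) follow from those of $\tilde V$ and $v$ by routine changes of variable.

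For the PDE identity, I would abbreviate $\sigma(s)=\sqrt{1+|\lambda|^2 s^2}$ and $\tau(s)=\arctan(|\lambda|s)/|\lambda|$, so that $\tau'(s)=\sigma(s)^{-2}$, $\sigma(s)\sigma'(s)=|\lambda|^2 s$, and $\sigma(s)^{-2}=\cos^2(|\lambda|\tau)$. Writing $w(z,s)=\sigma(s)^{-n}e^{i\Phi(z,s)}v(z/\sigma(s),\tau(s))$ with $\Phi(z,s)=|\lambda|^2 s|z|^2/\bigl(4\sigma(s)^2\bigr)$, I would then compute $i\partial_s w+\Delta w$ directly by the chain rule. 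The phase $\Phi$ is engineered so that $\partial_s\Phi+|\nabla_z\Phi|^2$ exactly cancels the contribution of the harmonic potential $(|\lambda|^2/4)|z/\sigma|^2 v$ produced by the $v$-equation (\ref{exp931}); the drift term $2i\nabla\Phi\cdot\nabla_z[v(z/\sigma,\tau)]$ cancels the $s$-derivative of the dilation argument of $v$; and the exponent $-n$ in $\sigma^{-n}$ absorbs the discrepancy between $\tau'(s)\partial_\tau v$ and $\Delta_z[\sigma^{-n}e^{i\Phi}]$. The residual is precisely $\sigma^{-2}\tilde V(z/\sigma,\tau)\,w(z,s)=W(z,s)w(z,s)$. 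Continuity $w\in C\bigl([0,\tan(|\lambda|)/|\lambda|]:L^2\bigr)$ is inherited from the continuity of $v$ on $[0,1]$ and the smoothness of $\sigma,\tau,\Phi$.

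For the quantitative conclusions I would use that the transformation is unitary on $L^2$: the Jacobian $\sigma^{-2n}$ of the dilation $z\mapsto z/\sigma$ is cancelled exactly by $|\sigma^{-n}e^{i\Phi}|^2=\sigma^{-2n}$, so $\|w(\cdot,s)\|_{L^2}=\|v(\cdot,\tau(s))\|_{L^2}$, giving (\ref{exp941}) from (\ref{exp933}). Property (\ref{exp942}) is identical to (\ref{exp934}) since $\sigma(0)=1$ and $\Phi(z,0)=0$. For (\ref{exp943}), at $s^*=\tan(|\lambda|)/|\lambda|$ one has $\sigma(s^*)=1/\cos(|\lambda|)$, so (\ref{exp935}) translates to $|x_1|\leq a_4:=a_3/\cos(|\lambda|)$ on $\supp w(\cdot,\cdot,s^*)$. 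Finally, boundedness of $W$ is immediate from its definition, and for (\ref{exp940}) the substitution $\tau=\tau(s)$ (so that $\mathrm{d}s=\sigma(s)^2\,\mathrm{d}\tau$) reduces the integral to $\int_0^1\|\tilde V(\cdot,\tau)\|_{L^\infty(\R^{2n}\setminus B_{\rho\cos(|\lambda|\tau)})}\,\mathrm{d}\tau$, which tends to $0$ as $\rho\to\infty$ by (\ref{exp932}) and dominated convergence, since $\cos(|\lambda|\tau)\geq\cos(|\lambda|)>0$ for $\tau\in[0,1]$.

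The main obstacle is the PDE verification itself; it is classical but requires careful bookkeeping to track the three cancellations (harmonic potential, dilation drift, and Jacobian matching) sketched above. Everything else reduces to direct substitution.
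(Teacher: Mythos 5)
Your proposal is correct and follows essentially the same route as the paper: the pseudo-conformal (lens) transformation conjugating the Hermite--Schr\"odinger equation to the free one with potential $W$, with the quantitative properties \eqref{exp940}--\eqref{exp943} obtained by the same changes of variables (the $L^2$-unitarity from the Jacobian/prefactor cancellation, $a_4=\sec(|\lambda|)a_3$, and the substitution $\tau=\arctan(|\lambda|s)/|\lambda|$ together with $\cos(|\lambda|\tau)\geq\cos(|\lambda|)$ for \eqref{exp940}). The only difference is that the paper delegates the PDE verification to \cite[Proposition 4.3]{CF} whereas you carry out the chain-rule computation directly; your description of the three cancellations is accurate.
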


\begin{proof}
First, since $u$ and $V$ satisfy the hypothesis of Theorem \ref{th921}, from Proposition \ref{prop921} we get that
$v$ and $\tilde{V}$ satisfy \eqref{exp932}-\eqref{exp935}. Now,
$$
\begin{aligned}
|W(x,&y,s)|\\
&=(1+|\lambda|^2 s^2)^{-1}\left|\tilde{V}\left(\frac{1}{\sqrt{1+|\lambda|^2 s^2}}(x,y),
\frac{1}{|\lambda|}\arctan(|\lambda|s)\right)\right| \\
&\leq \|\tilde{V}\|_{L^\infty(\mathbb R^{2n} \times [0,1])}
\end{aligned}
$$
for $0\leq s\leq \frac{1}{|\lambda|}\tan(|\lambda|).$
Therefore $\|W\|_{L^\infty(\mathbb R^{2n} \times [0,\frac{1}{|\lambda|}\tan(|\lambda|)])}<\infty.$

Put $(\tilde{x},\tilde{y})=\frac{1}{\sqrt{1+|\lambda|^2 s^2}}(x,y).$ Then $|(x,y)|\geq \rho$ implies
$|(\tilde{x},\tilde{y})|\geq \rho \cos(|\lambda|).$ Hence \eqref{exp932} yields
\begin{multline*}
\lim_{\rho\rightarrow\infty}\int_0^{\frac{1}{|\lambda|}\tan(|\lambda|)}
\|W(\cdot,\cdot,s)\|_{L^\infty(\mathbb R^{2n}\setminus B_\rho)}\,\mbox{d}s\\
\leq\lim_{\rho\rightarrow\infty}\int_0^1
\|\tilde{V}(\cdot,\cdot,s)\|_{L^\infty(\mathbb R^{2n}\setminus B_\rho)}\,\mbox{d}s=0.
\end{multline*}

\smallskip

Next, an argument as in \cite[Proposition 4.3]{CF} shows that $w$ solves \eqref{exp937}.

Further, for $0\leq s\leq \frac{1}{|\lambda|}\tan(|\lambda|),$
$$
\begin{aligned}
\int_{\mathbb R^{2n}}&|w(x,y,s)|^2\,\mbox{d}x\,\mbox{d}y \\
=& \int_{\mathbb R^{2n}}
\left|v\left(\frac{1}{\sqrt{1+|\lambda|^2 s^2}}(x,y),\frac{1}{|\lambda|}\arctan(|\lambda|s)\right)\right|^2 
\,\frac{\mbox{d}x\,\mbox{d}y}{(1+|\lambda|^2 s^2)^{n}} \\
=&\int_{\mathbb R^{2n}}\left|v\left(x,y,\frac{1}{|\lambda|}\arctan(|\lambda|s)\right)\right|^2\,\mbox{d}x\,\mbox{d}y\\
\leq& \sup_{0\leq s\leq 1}\int_{\mathbb R^{2n}}|v(x,y,s)|^2 \,\mbox{d}x\,\mbox{d}y.
\end{aligned}
$$
Thus \eqref{exp941} follows from \eqref{exp933}. The condition \eqref{exp942} is same as \eqref{exp934}.
Finally \eqref{exp943} is easy from \eqref{exp935} with $a_4=\sec(|\lambda|) a_3.$
\end{proof}

\smallskip

\noindent{\bf Step 4.} {\em Conclusion.}

\smallskip

Suppose that $u$ and $V$ satisfy the hypothesis of Theorem \ref{th921}. Then Proposition \ref{prop922} yields $w,$
defined as in \eqref{exp936}, is a solution of \eqref{exp937} and has the properties listed therein.
Therefore, the Paley-Wiener theorem for Schr\"{o}dinger evolutions on the Euclidean space gives $w=0,$ see \cite{KPV}.
This infers $v=0,$ i.e., $u^\lambda=0$ for $0<|\lambda|<\frac{\pi}{2}$.

But, for each $(x,y)\in\mathbb R^{2n}$, $u^\lambda(x,y,1)$ has an
homomorphic extension in $\lambda$ around the real line. We thus conclude that $u^\lambda(x,y,1)=0$ for
all $\lambda\in\mathbb R\setminus \{0\}$. This finally implies that $u=0$.

\section{Data availability}
No data has been generated or analysed during this study.

\section{Funding and/or Conflicts of interests/Competing interests}

The authors have no relevant financial or non-financial interests to disclose.
%
%
%


\begin{thebibliography}{99}

\bibitem{AB} W.\,O. Amrein and A.-M. Berthier, {\em On support properties of {$L^{p}$}-functions and their
{F}ourier transforms,} J. Funct. Anal. {\bf 24} (1977), 258--267.

\bibitem{BGX} H. Bahouri, P. G\'erard and C.-J. Xu,
{\em Espaces de Besov et estimations de Strichartz g\'en\'eralis\'ees sur le groupe de Heisenberg,}
J. Anal. Math. {\bf 82} (2000), 93--118. 

\bibitem{BG} 
H. Bahouri and I. Gallagher, {\em Local dispersive and Strichartz estimates for the Schrödinger operator on the
Heisenberg group,} Commun. Math. Res. {\bf 39} (2023), 1--35.

\bibitem{BTD} S. Ben Sa\"{\i}d, S. Thangavelu and V.\,N. Dogga, {\em Uniqueness of solutions to Schr\"{o}dinger
equations on $H$-type groups,} J. Aust. Math. Soc. {\bf 95} (2013), 297--314.

\bibitem{B} M. Benedicks, {\em On Fourier transforms of functions supported on sets of finite Lebesgue measure,}
J. Math. Anal. Appl. {\bf 106} (1985), 180--183.



\bibitem{BD} A. Bonami and B. Demange, {\em A survey on uncertainty principles related to quadratic forms,}
Collect. Math. (2006), Vol. Extra, 1--36.

\bibitem{CF} B. Cassano and L. Fanelli, {\em Gaussian decay of harmonic oscillators and related models,}
J. Math. Anal. Appl. {\bf 456} (2017),  214--228.

\bibitem{Ch}
S. Chanillo,
{\em Uniqueness of solutions to Schr\"odinger equations on complex semi-simple Lie groups,}
Proc. Indian Acad. Sci. Math. Sci. {\bf 117} (2007), 325--331. 

\bibitem{CEKPV} 
M. Cowling, L. Escauriaza, C.\,E. Kenig, G. Ponce and L. Vega, {\em The Hardy uncertainty principle revisited,}
Indiana Univ. Math. J. {\bf 59} (2010), 2007--2025.

\bibitem{EKPV1}  L. Escauriaza, C.\,E. Kenig, G. Ponce and L. Vega, {\em On uniqueness properties of solutions of 
Schr\"odinger equations,} Comm. PDE. {\bf 31} (2006), 1811--1823.

\bibitem{EKPV2}  L. Escauriaza, C.\,E. Kenig, G. Ponce and L. Vega, {\em Hardy's uncertainty principle, convexity and 
Schr\"odinger evolutions,} J. Eur. Math. Soc. (JEMS) {\bf 10} (2008), 883--907.

\bibitem{EKPV3}  L. Escauriaza, C.\,E. Kenig, G. Ponce and L. Vega, {\em The sharp Hardy uncertainty principle for 
Schr\"odinger evolutions,} Duke Math. J. {\bf 155} (2010), 163--187.

\bibitem{FJP} A. Fern\'{a}ndez-Bertolin, Ph. Jaming and S. P\'{e}rez-Esteva, {\em An uncertainty principle for
solutions of the Schr\"{o}dinger equation on $H$-type groups,} J. Aust. Math. Soc. {\bf 111} (2021), 1-16.

\bibitem{FBM} A. Fernández-Bertolin and E. Malinnikova,
{\em Dynamical versions of Hardy’s uncertainty principle: a survey},
Bull. Amer. Math. Soc. {\bf 58} (2021), 357--375.

\bibitem{F} G.\,B. Folland, {\em Harmonic analysis in phase space,} Annals of Mathematics Studies, vol. 122,
Princeton University Press, Princeton, NJ (1989).

\bibitem{FS} G.\,B. Folland and A. Sitaram, {\em The uncertainty principle: a mathematical survey,}
J. Fourier Anal. Appl. {\bf 3} (1997), 207--238.

\bibitem{GS} 
S. Ghosh and R.\,K. Srivastava,
{\em Benedicks–Amrein–Berthier theorem for the Heisenberg motion group,}
Bull. London Math. Soc. {\bf 54} (2022), 526--539.


\bibitem{HJ} V. Havin and B. J\"{o}ricke, {\em The uncertainty principle in harmonic analysis,}
Ergebnisse der Mathematik und ihrer Grenzgebiete (3) [Results in Mathematics and Related Areas (3)],
28, Springer-Verlag, Berlin, 1994.

%

\bibitem{J} Ph. Jaming, {\em Nazarov's uncertainty principles in higher dimension,}
J. Approx. Theory {\bf 149} (2007),  30--41.

\bibitem{KPV} C.\,E. Kenig, G. Ponce and L. Vega, {\em A theorem of Paley-Wiener type for Schr\"{o}dinger evolutions,}
Ann. Sci. \'{E}c. Norm. Sup\'{e}r (4) {\bf 47} (2014),  539--557.

\bibitem{Ko}
O. Kovrijkine, {\em Some results related to the Logvinenko-Sereda theorem,}
Proc. Amer. Math. Soc. {\bf 129} (2001), 3037--3047.

\bibitem{LS} V.\,N. Logvinenko and Ju.\,F. Sereda, {\em Equivalent norms in spaces of entire functions of exponential type, (Russian)}
Teor. Funkci\u\i Funkcional. Anal. i Prilo\v zen. Vyp. {\bf 20} (1974), 102--111, 175.

\bibitem{LM} J. Ludwig and D. M\"{u}ller, {\em Uniqueness of solutions to Schr\"{o}dinger equations
on 2-step nilpotent Lie groups,} Proc. Amer. Math. Soc. {\bf 142} (2014),  2101--2118.

\bibitem{M}
E. Malinnikova,
{\em Uniqueness results for solutions of continuous and discrete PDE,}
in {\it Proceeding of the 8th European Congress of Mathematics}, EMS Press, 2023.

\bibitem{NR} 
E.\,K. Narayanan and P.\,K. Ratnakumar,
{\em Benedicks' theorem for the Heisenberg group},
Proc. Amer. Math. Soc. {\bf 138} (2010), 2135--2140.

\bibitem{N} F.\,L. Nazarov, {\em Local estimates for exponential polynomials and their applications to inequalities
of the uncertainty principle type,} (Russian) Algebra i Analiz {\bf 5} (1993), 3-66; translation in St. Petersburg
Math. J. {\bf 5} (1994), 663-717.

\bibitem{SVW}
C. Shubin, R. Vakilian and T. Wolff,
{\em Some harmonic analysis questions suggested by Anderson–Bernoulli models,}
Geom. Funct. Anal. {\bf 8} (1998) 932–-964.



\bibitem{T2} S. Thangavelu, {\em An introduction to the uncertainty principle, Hardy's theorem on Lie groups,
with a foreword by Gerald B. Folland}. Progress in Mathematics, vol. 217, Birkh\"{a}user Boston, Inc., Boston, MA (2004).

\bibitem{WWZ}
G. Wang, M. Wang and Y. Zhang, {\em Observability and unique continuation inequalities for the Schr\"odinger
equation,} J. Eur. Math. Soc. (JEMS), {\bf 21} (2019), 3513--3572.
\end{thebibliography}
\end{document}